\newcommand{\C}{\mathbb{C}}
\renewcommand{\P}{\mathbb{P}}
\newcommand{\gG}{\Gamma}
\newcommand{\gr}{\gamma}
\newcommand{\bo}{\mathcal{O}}
\newcommand{\ol}[1]{\overline{#1}}
\newcommand{\iso}{\cong}
\renewcommand{\l}{\ell}
\newcommand{\gx}{\chi}
\newcommand{\oo}{\infty}
\newcommand{\set}[1]{\left\{ #1 \right\}}
\numberwithin{equation}{section}
\theoremstyle{plain}
\newtheorem{theorem}{Theorem}[section]
\newtheorem{prop}[theorem]{Proposition}
\newtheorem{coro}[theorem]{Corollary}
\newtheorem{conj}[theorem]{Conjecture}
\theoremstyle{definition}
\newtheorem{definition}[theorem]{Definition}
\newtheorem{assumption}[theorem]{Assumption}
\newtheorem{remark}[theorem]{Remark}
\newtheorem{example}[theorem]{Example}
\DeclareMathOperator{\tor}{tor}
\DeclareMathOperator{\ext}{ext}
\begin{document}

\title{Betti Tables of Reducible Algebraic Curves}

\author[Bruce]{David J. Bruce}
\author[Kao]{Pin-Hung Kao}
\author[Nash]{Evan D. Nash}
\author[Perez]{Ben Perez}
\author[Vermeire]{Peter Vermeire}

\address{University of Michigan, Ann Arbor, MI, 48109}
\email{djbruce@umich.edu}

\address{Department of Mathematics, Central Michigan University, Mt Pleasant, MI, 48859}
\email{kao1p@cmich.edu}

\address{University of Nebraska-Lincoln, Lincoln, NE, 68588}
\email{E\_nash@cox.net}

\address{St. Olaf College,  Northfield, MN 55057}
\email{perez@stolaf.edu}

\address{Department of Mathematics, Central Michigan University, Mt Pleasant, MI, 48859}
\email{p.vermeire@cmich.edu}

\begin{abstract}
	We study the Betti tables of reducible algebraic curves with a focus on connected line arrangements and provide a general formula for computing the quadratic strand of the Betti table for line arrangements that satisfy certain hypotheses. We also give explicit formulas for the entries of the Betti tables for all curves of genus zero and one. Last, we give formulas for the graded Betti numbers for a class of curves of higher genus.
\end{abstract}
	
\thanks{The first, third, and fourth authors were supported by NSF grant DMS-1156890.}

\maketitle

	\section{Introduction}
	
	As in \cite{GraphCurves}, to a graph $G$ that satisfies Assumption~\ref{A:1.1} below, we associate a {\em graph curve} $\ol{G} \subset \P^n$ obtained by associating to each vertex of $G$ a line and to each edge connecting vertices a point of intersection of the corresponding lines. 
		
We know that a smooth curve of genus $g$ and degree $d \ge 2g+1+p$, $p \ge 0$, satisfies $N_{2,p}$~\cite{Green}. Thus the first $p+1$ entries in the quadratic strand of its Betti table are determined by its genus and degree. Line arrangements, on the other hand, are less uniformly behaved.

	Consider the following three graphs:
	
	\vspace{12pt}
	
	\begin{center}
	\begin{tabular}{p{1.25in} p{1.25in} p{1.25in}}
	\begin{tikzpicture}
		\node [draw, rotate=90, thick, minimum size=1.25cm, regular polygon, regular polygon sides=3, scale=.85] at (1.25,0) {};
		\node [draw, rotate=-90, thick, minimum size=1.25cm, regular polygon, regular polygon sides=3, scale=.85] at (-.5,0) {};
		\draw[thick] (.1,0)--(.65,0);
		\draw[thick] (1.55,-.5)--(.8,-.8);
        \draw[fill] (.8,-.8) circle(.075);
        \foreach \x in {-30,90,210} \draw [fill,rotate=90,yshift=-1.25cm] (\x:0.6cm) circle(0.075);
        \foreach \x in {-30,90,210} \draw [fill,rotate=-90,yshift=-.5cm] (\x:0.6cm) circle(0.075);
	\end{tikzpicture}
	&
	\begin{tikzpicture}
		\draw[thick] (0,0)--(1.8,0)--(1.8,1)--(0,1)--cycle;
	    \draw[thick] (.9,0)--(.9,1);
	    \foreach \x in {0,.9,1.8} \draw[fill] (\x,0) circle(.075);
	    \foreach \x in {0,.9,1.8} \draw[fill] (\x,1) circle(.075);
	    \draw[thick] (1.8,0)--(2.5,0);
	    \draw[fill] (2.5,0) circle(.075);
	\end{tikzpicture}
	&
	\begin{tikzpicture}[scale=2]
		\node [draw, rotate=-90, thick, minimum size=1.5cm, regular polygon, regular polygon sides=5] at (0,0) {};
		\foreach \x in {18,90,90+72,90+2*72,90+3*72} \draw [fill,rotate=-90] (\x:0.375cm) circle(0.0375);
		\draw[fill] (-	.743cm,.22cm) circle(.0375);
		\draw[fill] (-	.743cm,-.22cm) circle(.0375);
		\draw[thick] (-.303cm,.22cm)--(-.743cm,.22cm)--(-.743cm,-.22cm)--(-.303cm,-.22cm);
	\end{tikzpicture}
	\end{tabular}
	\end{center} 
	
	\vspace{12pt}
	
These graphs all have associated graph curves of degree seven and genus two in $\P^5$. Using \emph{Macaulay 2} \cite{M2}, we find their corresponding Betti tables to be:

	\vspace{12pt}

	\begin{center}
	\begin{tabular}{c c c}
		\begin{tabular}{c | c c c c c}
			- & 0 &  1 &  2 &  3 & 4\\ \hline
			T & 1 & 10 & 20 & 15 & 4\\
			0 & 1 &  - &  - &  - & -\\
			1 & - &  8 & 14 &  9 & 2\\
			2 & - &  2 &  6 &  6 & 2
		\end{tabular}
		&
		\begin{tabular}{c | c c c c c}
			- & 0 & 1 &  2 &  3 & 4\\ \hline
			T & 1 & 8 & 15 & 11 & 3\\
			0 & 1 & - &  - &  - & -\\
			1 & - & 8 & 12 &  6 & 1\\
			2 & - & - &  3 &  5 & 2
		\end{tabular}
		&
		\begin{tabular}{c | c c c c c}
			- & 0 & 1 &  2 & 3 & 4\\ \hline
			T & 1 & 8 & 13 & 8 & 2\\
			0 & 1 & - &  - & - & -\\
			1 & - & 8 & 12 & 4 & -\\
			2 & - & - &  1 & 4 & 2
		\end{tabular}
	\end{tabular}
	\end{center}
	
	\vspace{6pt}
	
	We begin by setting some notations and assumptions that will be used throughout the paper. 	We work throughout over an algebraically closed field of characteristic zero. For a given graph $G$, we use $d$, $m$, and $f$ to represent the number of vertices, edges, and faces, respectively, of $G$. Also, for a given graph $G$, the number of vertices of degree $i$ is denoted by $x_i$. Lastly, we let $g = p_a(\ol{G})$ be the arithmetic genus of the curve $\ol{G}$ associated to the graph $G$.  Note that $g=m-d+1$.
	
\begin{assumption}\label{A:1.1}
	For the remainder of the paper, if $G$ is a graph, then $G$ satisfies the followings:
	
	\begin{enumerate}
		\item $G$ is planar.
		
		\item	$G$ is connected.
		
		\item	$G$ is simple.
		
		\item	$G$ is strictly subtrivalent.
		
		\item	Every connected subgraph of $G$ has $d \ge 2g+1$.
	\end{enumerate}
	We note that if $G$ satisfies these conditions, then $\ol{G} \subset \P^{d-g}$ is arithmetically Cohen-Macaulay (ACM) and non-special \cite{ACM}.
\end{assumption}

Though we do not use this in what is to come, it is a straightforward combinatorial exercise to show the following:

\begin{prop}\label{P:1.2}
	$G$ satisfies Assumption~\ref{A:1.1} if and only if no connected subgraph of $G$ has $x_2 < 3 $ and $x_3 = d - x_2$.\qed
\end{prop}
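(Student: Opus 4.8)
The plan is to collapse the whole statement to one degree-counting identity and then dispose of a single boundary case by deleting a leaf. Throughout I read conditions \textit{(1)}--\textit{(4)} of Assumption~\ref{A:1.1} as standing hypotheses (they cannot be recovered from a purely combinatorial subgraph condition), so that the content of Proposition~\ref{P:1.2} is the equivalence of condition \textit{(5)} with the displayed property. The first step is to record the identity. For a subgraph $H$ write $x_i^H$ for the number of vertices of degree $i$ \emph{in} $H$; since $G$, and hence $H$, is strictly subtrivalent, every vertex of $H$ has degree at most $3$, so from $d_H=\sum_i x_i^H$ and the handshake relation $2m_H=\sum_i i\,x_i^H$ we get
\[
  3d_H - 2m_H \;=\; \sum_{i=0}^{3}(3-i)\,x_i^H \;=\; 3x_0^H + 2x_1^H + x_2^H .
\]
Because $g_H = m_H - d_H + 1$, the defining inequality $d_H \ge 2g_H+1$ of condition \textit{(5)} is equivalent to $3d_H-2m_H\ge 3$, that is, to $3x_0^H + 2x_1^H + x_2^H \ge 3$. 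I also note that a connected $H$ with at least two vertices has $x_0^H=0$, while a single vertex already satisfies \textit{(5)}, so only subgraphs with $x_0^H=0$ are of interest.

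Next I translate the displayed property. Using $d_H = x_0^H + x_1^H + x_2^H + x_3^H$, the condition ``$x_2^H<3$ and $x_3^H = d_H - x_2^H$'' says exactly that $x_0^H = x_1^H = 0$ and $x_2^H \le 2$. For such $H$ the identity gives $3d_H - 2m_H = x_2^H \le 2 < 3$, so $H$ violates \textit{(5)}. This settles the forward direction at once: if $G$ satisfies Assumption~\ref{A:1.1}, then \textit{(5)} holds for every connected subgraph, so no connected subgraph can have $x_0 = x_1 = 0$ with $x_2 \le 2$, which is precisely the asserted property.

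For the reverse direction I assume no connected subgraph has the displayed property and must deduce \textit{(5)}. If \textit{(5)} failed, some connected subgraph $H$ would have $x_0^H = 0$ and $2x_1^H + x_2^H \le 2$, leaving only two cases. If $x_1^H = 0$ then $x_2^H \le 2$ and $H$ itself has the displayed property, a contradiction. The remaining case $x_1^H = 1,\ x_2^H = 0$ is the one point requiring an idea, and it is the main (if mild) obstacle: here $H$ has a unique leaf $\ell$ attached to a vertex $u$, with every other vertex of degree $3$. I would delete $\ell$; removing a leaf preserves connectedness, and the only degree that changes is that of $u$, which drops to $2$. Thus the connected subgraph $H' = H \setminus \{\ell\}$ has $x_0^{H'} = x_1^{H'} = 0$ and $x_2^{H'} = 1 \le 2$, so $H'$ has the displayed property --- again a contradiction. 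A parity check on $2m_H = x_1^H + 3x_3^H$ forces $x_3^H$ odd, and simplicity rules out $x_3^H = 1$, so $x_3^H \ge 3$ and $H'$ is not a single vertex; hence $H'$ is a legitimate counterexample. Therefore \textit{(5)} holds.

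In short, the degree identity makes both the forward implication and the $x_1=0$ case automatic, so the only genuine work is excluding the ``near-miss'' violators carrying a single leaf. The leaf-deletion reduction handles exactly these by trading the offending leaf for one degree-two vertex, bringing the subgraph back into the form $x_0 = x_1 = 0,\ x_2 \le 2$ covered by the identity.
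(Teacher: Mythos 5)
Your argument is correct. There is nothing in the paper to compare it against: Proposition~\ref{P:1.2} is stated with no proof at all (the authors dismiss it as ``a straightforward combinatorial exercise''), so you are supplying the missing argument rather than paralleling one. Your reduction is sound: the identity $3d_H-2m_H=3x_0^H+2x_1^H+x_2^H$ correctly converts condition (5) into $3x_0+2x_1+x_2\ge 3$, the displayed property correctly unpacks to $x_0=x_1=0$ and $x_2\le 2$, and the only case not handled automatically, namely $x_0=0$, $x_1=1$, $x_2=0$, is disposed of by deleting the unique leaf, which preserves connectedness and produces a subgraph with $x_0=x_1=0$, $x_2=1$. Your reading of the ``if and only if'' as an equivalence between condition (5) and the displayed property, with (1)--(4) taken as standing hypotheses, is also the only sensible interpretation, and it is good that you made it explicit.
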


%\begin{prop}\label{P:1.3}
	%Let $G$ be a graph corresponding to a curve of genus $g$ that satisfies Assumption~\ref{A:1.1}, then $G$ is planar.
%\end{prop}

To fix notation, recall that a Betti table has the following form:

\vspace{6pt}

\begin{center}
\begin{tabular}{c | c c c c c}
	       - &        0 &         1 &        2 &        3 & $\cdots$\\ \hline
	       0 &        1 &   $\cdot$ &  $\cdot$ &  $\cdot$ & $\cdots$\\
	       1 &  $\cdot$ &  $b_{12}$ & $b_{23}$ & $b_{34}$ & $\cdots$\\
	       2 &  $\cdot$ &  $b_{13}$ & $b_{24}$ & $b_{35}$ & $\cdots$\\
	       3 &  $\cdot$ &  $b_{14}$ & $b_{25}$ & $b_{36}$ & $\cdots$\\
	$\vdots$ & $\vdots$ &  $\vdots$ & $\vdots$ & $\vdots$ & $\vdots$\\
\end{tabular}
\end{center}

\vspace{6pt}

\begin{definition}\label{D:1.4}
	Following \cite{EGHP}, we say that an embedded variety $X \subset \P^n$ satisfies $N_{d,p}$ if $b_{1,j} = 0$ for $j > d$ and $b_{i,d+i} = 0$ for $1 \le i \le p$.
\end{definition}

	We now give a general result from the literature and some immediate corollaries. See \cite{Green} and \cite{Laz} for an excellent introduction to these ideas.
	
\begin{prop}\label{P:1.5} \cite[5.8]{Eisenbud}
	Let $C \subset \P^n$ be an ACM curve with homogeneous coordinate ring $R$. Then there is an exact sequence
	\begin{align*}
	 0 &\xlongrightarrow{} \operatorname{Tor}_i(R,\mathbb{C})_k \xlongrightarrow{} H^1 (C,\wedge^{i+1}M_{L}(k-i-1)) \\
	   &\xlongrightarrow{}  H^1 (C,\wedge^{i+1}\Gamma (k-i-1)) \xlongrightarrow{}  H^1(C,\wedge^i M_L (k-i))  \xlongrightarrow{}  0.
	\end{align*}
	Where $\gG$ is trivial of rank $n+1$ and $M_L$ is the kernel of the surjection $\gG(C,\bo_C(1)) \to \bo_C(1)$.
\end{prop}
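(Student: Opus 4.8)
The plan is to compute $\operatorname{Tor}_i(R,\C)_k$ by the Koszul complex and then translate that computation into the sheaf cohomology of the right-hand side using the sequence defining $M_L$. Write $V = H^0(C,\bo_C(1))$, so $\gG = V\otimes\bo_C$ and $S = \operatorname{Sym}V$ is the homogeneous coordinate ring of $\P^n$. Because $C$ is ACM, one has $R_k \iso H^0(C,\bo_C(k))$ in every degree, so the degree-$k$ strand of the Koszul complex computing $\operatorname{Tor}^S_i(R,\C)$ is
\[ \wedge^{i+1}V\otimes H^0(\bo_C(k-i-1)) \xrightarrow{\partial_{i+1}} \wedge^{i}V\otimes H^0(\bo_C(k-i)) \xrightarrow{\partial_{i}} \wedge^{i-1}V\otimes H^0(\bo_C(k-i+1)), \]
and $\operatorname{Tor}_i(R,\C)_k = \ker\partial_i/\operatorname{im}\partial_{i+1}$.

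For the geometric input, I would take $\wedge^{i+1}$ of the evaluation sequence $0\to M_L\to\gG\to\bo_C(1)\to 0$; since $\bo_C(1)$ is a line bundle this yields the short exact sequence $0\to\wedge^{i+1}M_L\to\wedge^{i+1}\gG\to\wedge^{i}M_L\otimes\bo_C(1)\to 0$. Twisting by $\bo_C(k-i-1)$ and passing to the long exact cohomology sequence gives
\[ 0\to H^0(\wedge^{i+1}M_L(k-i-1))\to H^0(\wedge^{i+1}\gG(k-i-1)) \xrightarrow{q} H^0(\wedge^{i}M_L(k-i)) \xrightarrow{\delta} H^1(\wedge^{i+1}M_L(k-i-1))\to\cdots \]
Since $C$ is a curve, $H^2$ of any coherent sheaf vanishes, so this sequence terminates in $H^1(\wedge^{i+1}\gG(k-i-1))\to H^1(\wedge^{i}M_L(k-i))\to 0$; this already supplies the surjectivity at the right-hand end of the claimed four-term sequence.

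The crux is to identify $\operatorname{Tor}_i(R,\C)_k$ with $\operatorname{coker}q$. The key observation is that the Koszul differential is contraction against the tautological map $V\otimes\bo_C\to\bo_C(1)$: concretely, $\partial_{i}$ factors as $\wedge^{i}V\otimes H^0(\bo_C(k-i)) \xrightarrow{q_i} H^0(\wedge^{i-1}M_L(k-i+1)) \hookrightarrow \wedge^{i-1}V\otimes H^0(\bo_C(k-i+1))$, where $q_i$ is the global-sections map of the twisted exterior-power sequence in homological degree $i$ and the second arrow is induced by $\wedge^{i-1}M_L\hookrightarrow\wedge^{i-1}\gG$. Establishing this factorization — matching the alternating-sum formula for $\partial_i$ with the contraction defining $q_i$, tracking signs and twists correctly — is the step I expect to be the \emph{main obstacle}. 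Granting it, injectivity of the inclusion shows $\ker\partial_i = H^0(\wedge^{i}M_L(k-i))$ and $\operatorname{im}\partial_{i+1} = \operatorname{im}q$, so that $\operatorname{Tor}_i(R,\C)_k = H^0(\wedge^{i}M_L(k-i))/\operatorname{im}q = \operatorname{coker}q$.

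Finally I would splice the two computations. Exactness at $H^0(\wedge^{i}M_L(k-i))$ identifies $\operatorname{coker}q$ with the image of $\delta$, i.e.\ with $\ker\!\big(H^1(\wedge^{i+1}M_L(k-i-1))\to H^1(\wedge^{i+1}\gG(k-i-1))\big)$; combined with the isomorphism $\operatorname{Tor}_i(R,\C)_k\iso\operatorname{coker}q$ this yields the injection $\operatorname{Tor}_i(R,\C)_k\hookrightarrow H^1(\wedge^{i+1}M_L(k-i-1))$ with the desired image. Appending the tail $H^1(\wedge^{i+1}M_L(k-i-1))\to H^1(\wedge^{i+1}\gG(k-i-1))\to H^1(\wedge^{i}M_L(k-i))\to 0$ of the long exact sequence from the second paragraph produces exactly the four-term exact sequence asserted in the proposition.
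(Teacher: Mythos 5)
The paper gives no proof of this proposition; it is quoted verbatim from the literature with the citation \cite[5.8]{Eisenbud}. Your reconstruction is correct and is essentially the standard argument of that source: the ACM hypothesis identifies the degree-$k$ Koszul strand with $\wedge^{\bullet}V\otimes H^0(\bo_C(k-\bullet))$, the Koszul differential factors through the global sections of $0\to\wedge^{i+1}M_L\to\wedge^{i+1}\gG\to\wedge^{i}M_L(1)\to 0$, and splicing the resulting identification $\operatorname{Tor}_i(R,\C)_k\iso\operatorname{coker}q$ into the long exact cohomology sequence (with $H^2=0$ on a curve) yields the four-term sequence. No gaps; the step you flag as the main obstacle (the factorization of $\partial$ through the contraction map) is indeed the only point requiring care, and it goes through as you describe.
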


\begin{coro}\label{C:1.6}
	If $\ol{G} \subset \P^n$ is a curve of arithmetic genus $g$ associated to a graph $G$ satisfying Assumption~\ref{A:1.1}, then
	\begin{enumerate}
		\item	For $k \ge i+3$ we have $b_{i,k}(\ol{G}) = 0$, thus $\ol{G}$ is 3-regular;
		
		\item	$b_{i,i+2}(\ol{G}) = h^1(\ol{G}, \wedge^{i+1} M_L(1))$;
		
		\item	$b_{i,i+1}(\ol{G}) = h^1(\ol{G}, \wedge^{i+1} M_L) - g {n+1 \choose i+1} + b_{i-1,i+1}(\ol{G})$.
	\end{enumerate}
\end{coro}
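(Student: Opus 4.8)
The plan is to substitute the cohomological vanishings forced by Assumption~\ref{A:1.1} into the four-term exact sequence of Proposition~\ref{P:1.5} and then read off each of the three parts by specializing the twist $k$. Write $L=\bo_{\ol{G}}(1)$ and $n=d-g$. Since $\gG = H^0(\ol{G},L)\otimes\bo_{\ol{G}}$ is trivial of rank $n+1$, each exterior power is trivial, so $\wedge^{p}\gG(t)\iso\bo_{\ol{G}}(t)^{\binom{n+1}{p}}$ and its cohomology is governed entirely by that of the structure sheaf twists. The two inputs I will extract from the hypotheses are that $\ol{G}$ is non-special, giving $h^1(\ol{G},\bo_{\ol{G}}(t))=0$ for all $t\ge 1$, and that $h^1(\ol{G},\bo_{\ol{G}})=g$; I will also use freely that $H^2$ of any sheaf on the curve $\ol{G}$ vanishes.

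The first step is a vanishing lemma: $h^1(\ol{G},\wedge^{q}M_L(s))=0$ for all $q\ge 0$ and all $s\ge 2$. Because $L$ is a line bundle, the defining sequence $0\to M_L\to\gG\to L\to 0$ produces the truncated exterior-power sequence
\[ 0\longrightarrow \wedge^{q+1}M_L \longrightarrow \wedge^{q+1}\gG \longrightarrow \wedge^{q}M_L(1)\longrightarrow 0. \]
Twisting by $\bo_{\ol{G}}(s-1)$ and passing to cohomology, the term $H^1(\ol{G},\wedge^{q}M_L(s))$ sits between $H^1(\ol{G},\wedge^{q+1}\gG(s-1))\iso h^1(\ol{G},\bo_{\ol{G}}(s-1))^{\binom{n+1}{q+1}}$ and $H^2(\ol{G},\wedge^{q+1}M_L(s-1))$. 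The latter vanishes since $\ol{G}$ is a curve, and the former vanishes for $s-1\ge 1$ by non-speciality, so the lemma follows in one step (the case $q=0$ being $h^1(\bo_{\ol{G}}(s))=0$).

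Given the lemma the three assertions come straight out of Proposition~\ref{P:1.5}. For (1), when $k\ge i+3$ the term $\Tor_i(R,\C)_k$ injects into $H^1(\ol{G},\wedge^{i+1}M_L(k-i-1))$, which vanishes because $k-i-1\ge 2$; hence $b_{i,k}(\ol{G})=0$, and as the largest surviving diagonal is $k-i=2$, this is exactly $3$-regularity. For (2), set $k=i+2$: the middle term $H^1(\ol{G},\wedge^{i+1}\gG(1))\iso h^1(\bo_{\ol{G}}(1))^{\binom{n+1}{i+1}}$ vanishes by non-speciality and the last term $H^1(\ol{G},\wedge^{i}M_L(2))$ vanishes by the lemma, so the sequence collapses to $\Tor_i(R,\C)_{i+2}\iso H^1(\ol{G},\wedge^{i+1}M_L(1))$, which is the claimed formula.

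For (3), take $k=i+1$ and use that the alternating sum of the dimensions in the four-term exact sequence is zero. The middle term contributes $h^1(\bo_{\ol{G}})\binom{n+1}{i+1}=g\binom{n+1}{i+1}$, while the last term $H^1(\ol{G},\wedge^{i}M_L(1))$ is identified with $b_{i-1,i+1}(\ol{G})$ by applying (2) with $i$ replaced by $i-1$. Solving the alternating sum for $\dim\Tor_i(R,\C)_{i+1}$ then gives $b_{i,i+1}(\ol{G})=h^1(\ol{G},\wedge^{i+1}M_L)-g\binom{n+1}{i+1}+b_{i-1,i+1}(\ol{G})$. I expect the genuine content to lie entirely in the vanishing lemma, and within it the one delicate point is that non-speciality of $L$ must be known to persist for every positive twist on the singular, reducible curve $\ol{G}$; once that cohomology of the structure sheaf is in hand, the rest is formal manipulation of the exact sequence.
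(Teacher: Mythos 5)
Your argument is correct and follows essentially the same route as the paper: both substitute the vanishing $H^1(\ol{G},\wedge^{i+1}\gG(k-i-1))=0$ for $k\ge i+2$ (coming from non-speciality of all positive twists of $\bo_{\ol{G}}(1)$) into the four-term sequence of Proposition~\ref{P:1.5} and then read off the three cases. Your explicit vanishing lemma for $h^1(\ol{G},\wedge^{q}M_L(s))$, $s\ge 2$, and the alternating-sum computation for part (3) merely spell out steps the paper leaves as ``straightforward,'' so no substantive difference.
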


\begin{proof}
	We use Proposition~\ref{P:1.5}.  First note that $b_{i,k}(\ol{G}) = \tor_i(R,\C)_k$ \cite{Laz}. As our graphs $G$ have $H^1(\ol{G},\bo(1)) = 0$, we have $H^1(\ol{G}, \wedge^{i+1} \gG(k-i-1)) = 0$ for $k \ge i+2$ proving both (1) and (2). (3) is straightforward.
\end{proof}

\begin{prop}\label{P:1.7}
	If $G$ satisfies Assumption~\ref{A:1.1}, then $H^0(\ol{G}, \wedge^k M_L) = 0$ for $k \ge 1$.
\end{prop}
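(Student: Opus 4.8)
The plan is to exploit the exterior-power filtration of the defining sequence of $M_L$ and to reduce the vanishing to a statement of multilinear algebra. Write $L = \bo_{\ol{G}}(1)$ and $V = H^0(\ol{G}, L)$, so that by definition $M_L$ is the kernel of the evaluation $V \otimes \bo_{\ol{G}} \to L$. Since $L$ is very ample this evaluation is a surjection of locally free sheaves onto a line bundle, so $M_L$ is a vector bundle of rank $n$, and for each $k \ge 1$ the sequence $0 \to M_L \to V \otimes \bo_{\ol{G}} \to L \to 0$ induces the exterior-power sequence
\begin{equation*}
0 \longrightarrow \wedge^k M_L \longrightarrow \wedge^k V \otimes \bo_{\ol{G}} \xrightarrow{\ \pi_k\ } \wedge^{k-1} M_L \otimes L \longrightarrow 0.
\end{equation*}
For $k > \operatorname{rank} M_L = n$ the sheaf $\wedge^k M_L$ vanishes and there is nothing to prove, so I would assume $1 \le k \le n$.

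Next I would take global sections. Because $G$ is connected, $H^0(\ol{G}, \bo_{\ol{G}}) = \C$, so the left-exact sequence reads
\begin{equation*}
0 \longrightarrow H^0(\ol{G}, \wedge^k M_L) \longrightarrow \wedge^k V \xrightarrow{\ H^0(\pi_k)\ } H^0(\ol{G}, \wedge^{k-1} M_L \otimes L),
\end{equation*}
and it therefore suffices to prove $H^0(\pi_k)$ is injective. To see this I would compose with the inclusion $H^0(\ol{G}, \wedge^{k-1} M_L \otimes L) \hookrightarrow \wedge^{k-1} V \otimes V$ coming from the $(k-1)$-st exterior-power sequence tensored by $L$, together with $H^0(L) = V$. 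The resulting composite $\wedge^k V \otimes \bo_{\ol{G}} \to \wedge^{k-1} V \otimes L$ is precisely the Koszul differential given by contraction against the evaluation map, and on global sections it becomes the comultiplication
\begin{equation*}
\delta \colon \wedge^k V \longrightarrow \wedge^{k-1} V \otimes V, \qquad v_1 \wedge \cdots \wedge v_k \longmapsto \sum_{i=1}^k (-1)^{i-1} (v_1 \wedge \cdots \widehat{v_i} \cdots \wedge v_k) \otimes v_i.
\end{equation*}

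Finally, $\delta$ is injective in characteristic zero: composing $\delta$ with the wedge product $\wedge^{k-1} V \otimes V \to \wedge^k V$ yields multiplication by $\pm k$, which is invertible for $k \ge 1$. Since $\delta = (\text{inclusion}) \circ H^0(\pi_k)$ is injective, the first arrow $H^0(\pi_k)$ is injective as well, and hence $H^0(\ol{G}, \wedge^k M_L) = 0$, as desired.

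The one point that needs care — and essentially the only obstacle — is the identification of the sheaf map $\wedge^k V \otimes \bo_{\ol{G}} \to \wedge^{k-1} V \otimes L$ with the Koszul contraction, and hence of $H^0(\pi_k)$ (after the inclusion) with $\delta$; this is a functoriality check for the exterior-power sequences that is routine but must be carried out with consistent sign conventions, along with confirming that $M_L$ is genuinely locally free on the nodal curve $\ol{G}$. It is worth recording that the argument uses only that $\ol{G}$ is connected and that we work in characteristic zero, not the full strength of Assumption~\ref{A:1.1}.
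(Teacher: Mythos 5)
Your proof is correct, but it takes a genuinely different route from the paper's. The paper's argument is shorter and avoids exterior powers until the very last step: it tensors the defining sequence $0 \to M_L \to \gG(\ol{G},\bo(1)) \to \bo(1) \to 0$ with $M_L^{\otimes (k-1)}$ and inducts, using linear normality to get $H^0(\ol{G},M_L)=0$ and then left-exactness of global sections to get $H^0(\ol{G},M_L^{\otimes k}) \hookrightarrow \gG \otimes H^0(\ol{G},M_L^{\otimes (k-1)}) = 0$; finally, in characteristic zero $\wedge^k M_L$ is a direct summand of $M_L^{\otimes k}$ (via the antisymmetrization projector), so its $H^0$ vanishes as well. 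This sidesteps the one delicate point you correctly flag in your own approach, namely the identification of the boundary map of the exterior-power filtration with the Koszul comultiplication $\delta$ and the attendant sign bookkeeping. What your version buys in exchange is the standard Koszul-cohomology interpretation: you are really proving $K_{k,0}(\ol{G};L)=0$ for $k\ge 1$, which places the statement in Green's framework and isolates exactly which hypotheses enter (global generation of $L$, injectivity of $V \to H^0(\ol{G},L)$, $H^0(\ol{G},\bo_{\ol{G}})=\C$, and invertibility of $k$). One small caveat about your closing remark: you do use slightly more than connectedness and characteristic zero, since identifying the paper's $M_L$ (defined as the kernel of $\gG \to \bo(1)$ with $\gG$ trivial of rank $n+1$) with your kernel of $H^0(\ol{G},L)\otimes \bo_{\ol{G}} \to L$ requires linear normality of the embedding, which is exactly what the paper's proof invokes. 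Note also that both arguments genuinely need the characteristic assumption, yours through the retraction $\wedge \circ \delta = \pm k\cdot \mathrm{id}$ and the paper's through the splitting of the tensor power.
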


\begin{proof}
	Consider the sequence
	\[
		0 \xrightarrow{} M_L \xrightarrow{} \gG(\ol{G},\bo(1)) \xrightarrow{} \bo(1) \xrightarrow{} 0.
	\]
	Since $\ol{G}$ is linearly normal, we have $H^0(\ol{G},M_L) = 0$. Tensoring the sequence with $M_L^{\otimes k-1}$ we see that $H^0(\ol{G},M_L^{\otimes k}) = 0$. As we work over a field of characteristic 0, $\wedge^k M_L$ is a summand of $M_L^{\otimes k}$ and the statement follows.
\end{proof}

Our main results are Theorems~\ref{P:2.12} and ~\ref{P:2.13} where we give explicit formulas for all curves of genus $0$ and $1$, and Theorem~\ref{P:3.6} where we give a formulas for a large class of curves of higher genus.  We end with a collection of conjectures.

\section{Curves of genus zero and one}

Corollary~\ref{C:1.6} enables us to find the closed formulas for Betti tables of various classes of graphs. In this section we provide such formulas for graphs of genus zero and genus one.
We first establish formulas for the Betti tables of paths and refer to \cite{Polygons} for the formulas for the Betti tables of cyclic graphs, and then show how to extend these to formulas for all graphs of genus zero and one. 

\subsection{Paths and Cyclic Graphs}

We first prove a general proposition that provides us some understanding of how to add a vertex of degree one to a graph $G$, which corresponds to adding a line $\l \in \P^n$ that intersects $\ol{G}$ transversally at a point.

\begin{prop}\label{P:2.1}
    Let $X$ and $Y$ be projective varieties in $\P^n$ and $\l \subset \P^n$ be a line such that $X = Y \cup \l$ and $\l$ intersects $Y$ transversally at a point. Then:
    \[
        H^1(X, \wedge^{i+1} M_L(1)) \iso H^1(Y, \wedge^{i+1}M_L(1)).
    \]
\end{prop}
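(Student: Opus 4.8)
The key observation is that $M_L$ is the restriction to the relevant variety of a single bundle living on all of $\P^n$. Indeed, the defining sequence realizes $M_L$ as the kernel of the restriction to $C$ of the Euler evaluation map $\bo_{\P^n}^{\,n+1} \to \bo_{\P^n}(1)$, so $M_L \iso \Omega^1_{\P^n}(1)|_C$. Consequently the sheaf $\mathcal{E} := \wedge^{i+1}\Omega^1_{\P^n}(1) \otimes \bo_{\P^n}(1)$ is locally free on $\P^n$, and its restrictions to $X$, $Y$, and $\l$ are exactly the bundles $\wedge^{i+1}M_L(1)$ appearing on each of those varieties in the statement. This is what lets me compare the three cohomology groups at all, and the plan is to do so through Mayer--Vietoris for the decomposition $X = Y \cup \l$.

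Let $p$ denote the point $Y \cap \l$. Since $\l$ meets $Y$ transversally at $p$, the scheme-theoretic intersection is a single reduced point, so the standard sequence for a union of two subschemes reads
\[
	0 \xrightarrow{} \bo_X \xrightarrow{} \bo_Y \oplus \bo_\l \xrightarrow{} \bo_p \xrightarrow{} 0.
\]
Because $\mathcal{E}$ is locally free, tensoring with $\mathcal{E}$ preserves exactness and, by the projection formula, identifies each term with the restriction of $\mathcal{E}$ to the corresponding subscheme. Taking the long exact sequence in cohomology and using $H^1(p, \mathcal{E}|_p) = 0$ yields
\[
	H^0(Y,\mathcal{E}|_Y) \oplus H^0(\l,\mathcal{E}|_\l) \xrightarrow{} H^0(p,\mathcal{E}|_p) \xrightarrow{} H^1(X,\mathcal{E}|_X) \xrightarrow{} H^1(Y,\mathcal{E}|_Y) \oplus H^1(\l,\mathcal{E}|_\l) \xrightarrow{} 0.
\]
To extract the desired isomorphism it therefore suffices to prove two facts about the line: that $H^1(\l, \mathcal{E}|_\l) = 0$, and that the evaluation map $H^0(\l,\mathcal{E}|_\l) \to H^0(p,\mathcal{E}|_p)$ is surjective. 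Granting both, the leftmost arrow is surjective, so the connecting map vanishes and $H^1(X,\mathcal{E}|_X) \iso H^1(Y,\mathcal{E}|_Y)$.

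Both facts follow from an explicit splitting on $\l \iso \P^1$. The cotangent bundle restricts to a line as $\Omega^1_{\P^n}|_\l \iso \bo_\l(-2) \oplus \bo_\l(-1)^{n-1}$, hence $M_L|_\l \iso \bo_\l(-1) \oplus \bo_\l^{\,n-1}$. Taking the $(i+1)$-st exterior power and twisting by $\bo_\l(1)$ then expresses $\mathcal{E}|_\l$ as a direct sum of copies of $\bo_\l$ and $\bo_\l(1)$. On $\P^1$ each of these line bundles is globally generated and has vanishing $H^1$, which simultaneously gives surjectivity of the evaluation at $p$ and the vanishing $H^1(\l,\mathcal{E}|_\l)=0$, completing the argument.

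The only genuinely delicate point is the one made first: one must verify that the bundles $M_L$ on $X$, on $Y$, and on $\l$ really are restrictions of a single bundle on $\P^n$, since otherwise the three cohomology groups carry no a priori relationship and Mayer--Vietoris cannot be applied. Once $M_L$ is identified with $\Omega^1_{\P^n}(1)|_{(-)}$, the remainder is bookkeeping with the Mayer--Vietoris sequence together with a one-line computation on $\P^1$, and I expect no further obstacles.
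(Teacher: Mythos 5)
Your proof is correct and follows essentially the same route as the paper: the same Mayer--Vietoris sequence for $X = Y \cup \l$, the same splitting $M_L|_\l \iso \bo_\l(-1) \oplus \bo_\l^{\,n-1}$, and the same two consequences (surjectivity of evaluation at $p$ and vanishing of $H^1$ on $\l$) to kill the connecting map. The only difference is that you spell out the identification of $M_L$ with $\Omega^1_{\P^n}(1)$ restricted from the ambient space, which the paper leaves implicit.
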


\begin{proof}
    It is straightforward to check that $M_L|_\l = \bo(-1) \oplus T$, where $T$ is trivial of rank $n-1$. Thus
    \[
		\wedge^{i+1}M_L(1)|_\l = \wedge^{i+1}T(1) \oplus \wedge^i T
    \]
    and it follows that 
    \[
        H^0(\l,\wedge^{i+1}M_L(1)) \to H^0(Y \cap \l, \wedge^{i+1} M_L(1))
    \]
	is surjective. Hence $f$ in the larger sequence below is surjective:
	\begin{align*}
	    0 &\to H^0(Y,\wedge^{i+1}M_L(1))\\
	      &\to H^0(X,\wedge^{i+1}M_L(1)) \oplus H^0(\l,\wedge^{i+1}M_L(1)) \xrightarrow{f} H^0(\{p\}, \wedge^{i+1}M_L(1))\\
	      &\to H^1(Y, \wedge^{i+1}M_L(1)) \to H^1(X,\wedge^{i+1}M_L(1)) \oplus H^1(\l,\wedge^{i+1}M_L(1)) \to 0
	\end{align*}
	Further, the above argument also implies $H^1(\l,\wedge^{i+1} M_L(1)) = 0$ and therefore $H^1(X, \wedge^{i+1} M_L(1)) \iso H^1(Y, \wedge^{i+1}M_L(1))$.
\end{proof}

We give a formula for the graded Betti numbers for the paths.

\begin{theorem}\label{T:2.2}
	Let $P_n$ be the path on $n$ vertices, and $\ol{P}_n$ be the corresponding curve in $\P^n$. Then for $i \ge 1$,
	\begin{align*}
		b_{i,i+1}(\ol{P}_n) &= n{n-1 \choose i} - {n \choose i+1},\\
		b_{i,i+2}(\ol{P}_n) &= 0.
	\end{align*}	
\end{theorem}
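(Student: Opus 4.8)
The plan is to read off both formulas from Corollary~\ref{C:1.6}, which for a genus-zero curve reduces every graded Betti number to cohomology of the exterior powers $\wedge^{i+1}M_L$ on $\ol{P}_n$. I would dispatch the vanishing $b_{i,i+2}(\ol{P}_n)=0$ first, since by Corollary~\ref{C:1.6}(3) it also controls the quadratic strand. At the outset observe that $\ol{P}_n\subset\P^n$ is a nondegenerate, linearly normal, ACM curve of degree $n$ with $d=n$ vertices, $m=n-1$ edges, and hence arithmetic genus $g=m-d+1=0$; thus $M_L$ sits in $0\to M_L\to\bo^{\oplus(n+1)}\to\bo(1)\to 0$ and is a vector bundle of rank $n$ and degree $-n$.

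For the vanishing, Corollary~\ref{C:1.6}(2) gives $b_{i,i+2}(\ol{P}_n)=h^1(\ol{P}_n,\wedge^{i+1}M_L(1))$, and I would show this is zero by induction on $n$ using Proposition~\ref{P:2.1}. A path has a degree-one end vertex, so $\ol{P}_n=\ol{P}_{n-1}\cup\l$ with $\l$ a line meeting $\ol{P}_{n-1}$ transversally at a single point; Proposition~\ref{P:2.1} then yields $H^1(\ol{P}_n,\wedge^{i+1}M_L(1))\iso H^1(\ol{P}_{n-1},\wedge^{i+1}M_L(1))$. Peeling off end vertices one at a time reduces to a single line $\l_0$, where $M_L|_{\l_0}=\bo(-1)\oplus\bo^{\oplus(n-1)}$, so that $\wedge^{i+1}M_L(1)|_{\l_0}$ is a direct sum of copies of $\bo(1)$ and $\bo$; since $H^1(\P^1,\bo(1))=H^1(\P^1,\bo)=0$, the base case vanishes, completing the induction.

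For the quadratic strand I would specialize Corollary~\ref{C:1.6}(3) to $g=0$, obtaining $b_{i,i+1}(\ol{P}_n)=h^1(\ol{P}_n,\wedge^{i+1}M_L)+b_{i-1,i+1}(\ol{P}_n)$. The correction term $b_{i-1,i+1}=b_{i-1,(i-1)+2}$ vanishes: it is $0$ by the previous paragraph when $i\ge 2$, and equals $b_{0,2}=0$ when $i=1$. Hence $b_{i,i+1}(\ol{P}_n)=h^1(\ol{P}_n,\wedge^{i+1}M_L)$. Proposition~\ref{P:1.7} gives $h^0(\ol{P}_n,\wedge^{i+1}M_L)=0$, so on the curve $\ol{P}_n$ we get $h^1=-\chi(\wedge^{i+1}M_L)$, and it remains only to compute this Euler characteristic.

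The main obstacle is this last numerical step, carried out on the reducible nodal curve $\ol{P}_n$. I would apply $\chi(E)=\deg E+\operatorname{rank}(E)(1-g)$, valid for a vector bundle $E$ on a projective curve of arithmetic genus $g$, together with the identity $\deg\wedge^{k}E=\binom{r-1}{k-1}\deg E$ for $E$ of rank $r$. With $r=n$ and $\deg M_L=-n$ this gives $\operatorname{rank}\wedge^{i+1}M_L=\binom{n}{i+1}$ and $\deg\wedge^{i+1}M_L=-n\binom{n-1}{i}$, so $\chi(\wedge^{i+1}M_L)=\binom{n}{i+1}-n\binom{n-1}{i}$ and therefore $h^1=n\binom{n-1}{i}-\binom{n}{i+1}$, as claimed. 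The only points requiring care are the justification of the Euler-characteristic formula for locally free sheaves on the singular curve $\ol{P}_n$ (where $\chi(\bo)=1-p_a=1$) and the exterior-power degree identity; both are standard once one notes that $M_L$ is locally free, being the kernel of a surjection of vector bundles.
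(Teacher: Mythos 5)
Your proposal is correct, and its overall skeleton matches the paper's: the cubic-strand vanishing is obtained exactly as in the paper, by peeling off degree-one vertices with Proposition~\ref{P:2.1} until only a single line remains, and the quadratic strand is reduced via Corollary~\ref{C:1.6}(3) and Proposition~\ref{P:1.7} to computing $h^1(\ol{P}_n,\wedge^{i+1}M_L)$. Where you diverge is in that last computation. The paper never invokes Riemann--Roch on the singular curve: it writes down the normalization (Mayer--Vietoris) sequence
\[
0 \to H^0(\ol{P}_n,\wedge^{i+1}M_L) \to \bigoplus_j H^0(\l_j,\wedge^{i+1}M_L) \to H^0(A,\wedge^{i+1}M_L) \to H^1(\ol{P}_n,\wedge^{i+1}M_L) \to 0
\]
and reads off $h^1 = (n-1)\binom{n}{i+1} - n\binom{n-1}{i+1}$, which agrees with your $n\binom{n-1}{i}-\binom{n}{i+1}$ by Pascal's identity. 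You instead compute $-\chi(\wedge^{i+1}M_L)$ using $\chi(E)=\deg E + \operatorname{rank}(E)(1-g)$ together with $\det(\wedge^{k}E)\iso(\det E)^{\otimes\binom{r-1}{k-1}}$. This is valid, and you rightly flag the one point needing care: Riemann--Roch for locally free sheaves on the reducible nodal curve. That fact is itself most easily justified by the very sequence the paper uses ($\chi(E)=\sum_i\chi(E|_{\l_i})-r\cdot\#\{\text{nodes}\}$, which yields $\chi(E)=\deg E+r(1-g)$ since $g=m-d+1$), so the paper's route is slightly more self-contained and closer in spirit to its later Theorem~\ref{T:2.9}, while yours packages the same computation into a cleaner, more portable numerical formula at the cost of citing that standard but not entirely free fact.
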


\begin{proof}
	We need to compute $h^1(\ol{P}_n, \wedge^{i+1}M_L(k-i-1))$. When $k=i+1$, we look at the following exact sequence
	\begin{align*}
		0 &\longrightarrow H^0(\ol{P}_n, \wedge^{i+1} M_L) \longrightarrow \bigoplus_{j=1}^n H^0(\l_j, \wedge^{i+1} M_L) \longrightarrow H^0(A, \wedge^{i+1} M_L)\\
		  &\longrightarrow H^1(\ol{P}_n,\wedge^{i+1} M_L) \longrightarrow \bigoplus_{j=1}^n H^1(\l_n,\wedge^{i+1}M_L) \longrightarrow 0,
	\end{align*}
	where $\l_1, \ldots, \l_n$ are the lines that comprise $\ol{P}_n$ and $A = \set{p_1, \ldots, p_{n-1}}$ is the set of points where these lines intersect. By Proposition~\ref{P:1.7} and the fact that $H^1(\l_i,\wedge^{i+1}M_L)=0$ for all $i$, the above becomes
	\begin{align*}
		0 &\longrightarrow \bigoplus_{j=1}^n H^0(\l_j,\wedge^{i+1}M_L) \longrightarrow H^0(A,\wedge^{i+1}M_L)\\
		  &\longrightarrow H^1(\ol{P}_n, \wedge^{i+1}M_L) \longrightarrow 0.
	\end{align*}
	Thus
	\[
		h^1(\ol{P}_n, \wedge^{i+1}M_L) = h^0(A,\wedge^{i+1}M_L) - \sum_{j=1}^n h^0(\l_i,\wedge^{i+1}M_L).
	\]
	Since $\wedge^{i+1} M_L$ has dimension ${n \choose i+1}$ and there are $n-1$ points, it follows that
	\[
		h^0(A,\wedge^{i+1} M_L) = (n-1){n \choose i+1}.
	\]
	We now note that $M_L|_{\l_i}$ has rank $n-1$, thus
	\[
		h^0(\l_j,\wedge^{i+1}M_L) = {n-1 \choose i+1}.
	\]
	
	Next, by Proposition~\ref{P:2.1} we have
	\[
		H^1(\ol{P}_n, \wedge^{i+1}M_L(1)) \iso \bigoplus_{j=1}^n H^1(\l_j,\wedge^{i+1} M_L(1)) = 0.
	\]
	Therefore $b_{i,i+2}(\ol{P}_n) = 0$ for all $i \ge 1$. Lastly, paths are of genus zero, so by part (3) of Corollary~\ref{C:1.6}, we have 
	\[ 
		b_{i,i+1}(\ol{P}_n) = (n-1){n \choose i+1} - n{n-1 \choose i+1} = n{n-1 \choose i}-{n\choose i+1}.
	\]
	\end{proof}

	A formula for the graded Betti numbers for cyclic graphs can be obtained immediately from \cite{Polygons}:
	
	\begin{theorem}\label{T:2.3}
		Let $C_{n+1}$ be the cyclic graph on $n+1$ vertices, and $\ol{C}_{n+1}$ be the corresponding curve in $\P^n$. Then for all $i < n$,
		\[
			b_{i,i+1}(\ol{C}_{n+1}) = n{n-1 \choose i} - {n-1 \choose i-1} - {n \choose i+1}; b_{i-1,i+1}(\ol{C}_{n+1}) = 0
		\]		
		and for $i=n$
		\[
			b_{n,n+1}(\ol{C}_{n+1})=0; b_{n-1,n+1}(\ol{C}_{n+1}) = 1.
		\]\qed
	\end{theorem}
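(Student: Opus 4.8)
The plan is to imitate the proof of Theorem~\ref{T:2.2}, replacing the normalization sequence of a chain of lines by that of a cycle and tracking the single extra node that closes the loop, since this is exactly what raises the arithmetic genus to one. First I would record the numerics: $C_{n+1}$ has $d=n+1$ vertices and $m=n+1$ edges, so $g = m-d+1 = 1$ and hence $\ol{C}_{n+1}\subset \P^{d-g}=\P^n$, as claimed. Writing $\ol{C}_{n+1} = \l_1\cup\cdots\cup\l_{n+1}$ with node set $A = \set{p_1,\dots,p_{n+1}}$, I would use, for any locally free sheaf $\mathcal{F}$ on $\ol{C}_{n+1}$, the normalization sequence
\begin{align*}
0 &\longrightarrow H^0(\ol{C}_{n+1}, \mathcal{F}) \longrightarrow \bigoplus_{j=1}^{n+1} H^0(\l_j,\mathcal{F}) \longrightarrow H^0(A,\mathcal{F})\\
&\longrightarrow H^1(\ol{C}_{n+1},\mathcal{F}) \longrightarrow \bigoplus_{j=1}^{n+1} H^1(\l_j,\mathcal{F}) \longrightarrow 0,
\end{align*}
which is the $(n+1)$-line, $(n+1)$-node analogue of the sequence already used for $\ol{P}_n$.

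Taking $\mathcal{F} = \wedge^{i+1}M_L$, Proposition~\ref{P:1.7} kills $H^0(\ol{C}_{n+1},\wedge^{i+1}M_L)$, while the splitting $M_L|_{\l_j}\iso \bo(-1)\oplus\bo^{\,n-1}$ from the proof of Proposition~\ref{P:2.1} gives $\wedge^{i+1}M_L|_{\l_j}\iso \bo^{\binom{n-1}{i+1}}\oplus\bo(-1)^{\binom{n-1}{i}}$, so that $h^0(\l_j,\wedge^{i+1}M_L)=\binom{n-1}{i+1}$ and $h^1(\l_j,\wedge^{i+1}M_L)=0$. Since each of the $n+1$ nodes contributes a fibre of dimension $\binom{n}{i+1}$, the sequence collapses to
\[
h^1(\ol{C}_{n+1},\wedge^{i+1}M_L) = (n+1)\binom{n}{i+1} - (n+1)\binom{n-1}{i+1} = (n+1)\binom{n-1}{i}.
\]

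For the second row I would use Corollary~\ref{C:1.6}(2) in the shifted form $b_{i-1,i+1}(\ol{C}_{n+1}) = h^1(\ol{C}_{n+1},\wedge^i M_L(1))$. The top case $i=n$ is immediate: taking determinants in $0\to M_L\to\gG\to\bo(1)\to 0$ gives $\wedge^n M_L = \det M_L\iso\bo(-1)$, whence $\wedge^n M_L(1)\iso\bo_{\ol{C}_{n+1}}$ and $h^1(\wedge^n M_L(1)) = h^1(\bo_{\ol{C}_{n+1}}) = g = 1$, i.e.\ $b_{n-1,n+1}=1$. For $1\le i\le n-1$ I would want the vanishing $h^1(\wedge^i M_L(1))=0$; running the twisted sequence, where now $\wedge^i M_L(1)|_{\l_j}\iso\bo(1)^{\binom{n-1}{i}}\oplus\bo^{\binom{n-1}{i-1}}$ has no $H^1$, reduces this to the surjectivity of $\bigoplus_j H^0(\l_j,\wedge^i M_L(1))\to H^0(A,\wedge^i M_L(1))$.

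Finally I would substitute into Corollary~\ref{C:1.6}(3) with $g=1$, obtaining $b_{i,i+1} = (n+1)\binom{n-1}{i} - \binom{n+1}{i+1} + b_{i-1,i+1}$; for $i<n$ this is $(n+1)\binom{n-1}{i}-\binom{n+1}{i+1}$, and two applications of Pascal's rule ($\binom{n-1}{i}+\binom{n-1}{i-1}=\binom{n}{i}$, then $\binom{n}{i}+\binom{n}{i+1}=\binom{n+1}{i+1}$) rewrite it as $n\binom{n-1}{i}-\binom{n-1}{i-1}-\binom{n}{i+1}$, while at $i=n$ the same identity together with $b_{n-1,n+1}=1$ yields $b_{n,n+1}=0$. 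The hard part is the twisted surjectivity for $1\le i\le n-1$: in contrast to the path, the trivial summands $\bo^{\binom{n-1}{i-1}}$ of $\wedge^i M_L(1)|_{\l_j}$ supply only constant sections, which take equal values at the two nodes lying on each line, so traversing the cycle produces a genuine monodromy-type obstruction that must be shown to vanish in this range and to leave exactly a one-dimensional defect at $i=n$. This is the content I would either extract directly from \cite{Polygons} or prove by hand by tracking how the $\bo(-1)$-direction of $M_L|_{\l_j}$ at each node rotates as one goes around the polygon; as a consistency check, the resulting table is exactly the one predicted for a smooth elliptic normal curve of degree $n+1$ in $\P^n$ via $N_{2,n-2}$.
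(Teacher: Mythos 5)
Your proposal is correct in outline and in all the computations I can check, but it takes a genuinely different route from the paper for the simple reason that the paper gives no proof at all: Theorem~\ref{T:2.3} is stated with a bare citation to \cite{Polygons} and a \qed. What you do instead is rerun the normalization-sequence argument from Theorem~\ref{T:2.2} for the cycle. The quadratic-strand computation is sound: Proposition~\ref{P:1.7} kills $H^0$, the splitting $M_L|_{\l_j}\iso\bo(-1)\oplus\bo^{n-1}$ gives $h^0(\l_j,\wedge^{i+1}M_L)=\binom{n-1}{i+1}$ and $h^1=0$, and the count $(n+1)\binom{n}{i+1}-(n+1)\binom{n-1}{i+1}=(n+1)\binom{n-1}{i}$ together with Corollary~\ref{C:1.6}(3) and Pascal's rule reproduces the stated formula exactly; this is in fact the $g=1$ case of what the paper later proves in general as Theorem~\ref{T:2.9}, so your argument buys an independent derivation of that part. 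Your determinant argument $\wedge^n M_L(1)\iso\bo_{\ol{C}_{n+1}}$, hence $b_{n-1,n+1}=h^1(\bo)=g=1$, is a clean self-contained proof of the one nonzero cubic entry that the paper never justifies internally. The one piece you do not actually prove is the vanishing $h^1(\ol{C}_{n+1},\wedge^i M_L(1))=0$ for $1\le i\le n-1$, i.e.\ the surjectivity of $\bigoplus_j H^0(\l_j,\wedge^i M_L(1))\to H^0(A,\wedge^i M_L(1))$ around the cycle. You are right to flag this as the hard step: it is precisely the assertion that the polygon satisfies $N_{2,n-2}$, which is the actual content of \cite{Polygons}, and the "monodromy" obstruction you describe is real (it is exactly what produces the single defect at $i=n$). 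Since you defer that step to the same reference the paper relies on for the entire theorem, your write-up is strictly more informative than the paper's; but be aware that without importing that vanishing from \cite{Polygons} (or carrying out the by-hand monodromy argument you sketch), the cubic-strand claim for $i<n$ remains unproved in your text.
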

	
\subsection{$\boldsymbol{d}$-Extensions}

	\begin{definition}\label{D:2.4}
		Let $G$ and $H$ be graphs.  We say $H$ is a \emph{$d$-extension} of $G$ if there exists a sequence of strictly subtrivalent graphs $G_{0},\ldots,G_{d}$  such that $G=G_{0}$, $H=G_{d}$ and $G_{t}$ can be obtained from $G_{t-1}$ by adding a degree one vertex and an edge.
	\end{definition}
	
	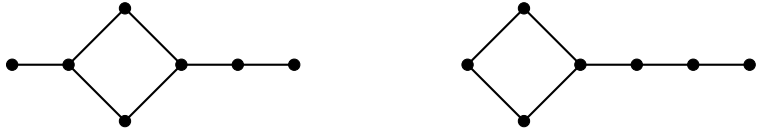
\begin{figure}[h]
	\begin{center}
	\begin{tikzpicture}[scale=.75]
		\draw[thick] (-1,0)--(0,-1)--(1,0)--(0,1)--cycle;
		\draw[thick] (-2,0)--(-1,0);
		\draw[thick] (1,0)--(3,0);
		\draw[fill] (-2,0) circle(.1);
		\draw[fill] (-1,0) circle(.1);
		\draw[fill] (1,0) circle(.1);
		\draw[fill] (2,0) circle(.1);
		\draw[fill] (3,0) circle(.1);
		\draw[fill] (0,1) circle(.1);
		\draw[fill] (0,-1) circle(.1);
	\end{tikzpicture}
	\hspace{.75in}
	\begin{tikzpicture}[scale=.75]
		\draw[thick] (-1,0)--(0,-1)--(1,0)--(0,1)--cycle;
		\draw[thick] (1,0)--(4,0);
		\draw[fill] (0,-1) circle(.1);
		\draw[fill] (0,1) circle(.1);
		\draw[fill] (-1,0) circle(.1);
		\draw[fill] (1,0) circle(.1);
		\draw[fill] (2,0) circle(.1);
		\draw[fill] (3,0) circle(.1);
		\draw[fill] (4,0) circle(.1);
	\end{tikzpicture}
	\caption{3-Extensions of $C_{4}$}
	\end{center}
	\end{figure}
	
	Computing the Betti tables for $d$-extensions involves taking the variety corresponding to a graph embedded in $\P^r$ and putting it in $\P^{r+d}$. 
	
	\begin{prop}\label{P:2.5}
		Suppose that $X \subseteq \P^r \subset \P^n$ is a projective variety which spans $\P^r$. Let $M_L = \Omega_{\P^n}(1) \otimes \bo_X$ and $\widetilde{M}_L = \Omega_{\P^r}(1) \otimes \bo_X$, then
		\[
			h^j(X,\wedge^{i+1}M_L(k-i-1)) = \sum_{t=0}^{n-r} {n-r \choose t} h^j(X,\wedge^{i+1-t} \widetilde{M}_L(k-i-1)).
		\]
	\end{prop}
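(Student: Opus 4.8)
The plan is to reduce the entire statement to a single splitting of vector bundles along the linear inclusion $\P^r \subset \P^n$, after which the formula becomes a formal consequence of expanding an exterior power of a direct sum. The key point to establish is that $M_L$ restricts from $\widetilde{M}_L$ in a very controlled way, namely
\[
	M_L \iso \widetilde{M}_L \oplus \bo_X^{\oplus(n-r)},
\]
and everything else follows by linear algebra and additivity of cohomology.

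To produce this splitting I would work one step earlier, comparing $\Omega_{\P^n}(1)$ and $\Omega_{\P^r}(1)$ before restricting to $X$. First I would choose homogeneous coordinates $x_0,\dots,x_n$ on $\P^n$ so that the spanned linear subspace is $\P^r = \{x_{r+1} = \dots = x_n = 0\}$; the spanning hypothesis is what guarantees that $\P^r$ really is the linear span and hence that $\widetilde{M}_L$ is the kernel of the correct evaluation map for $X$. Restricting the Euler sequence
\[
	0 \xrightarrow{} \Omega_{\P^n}(1) \xrightarrow{} \bo_{\P^n}^{\oplus(n+1)} \xrightarrow{} \bo_{\P^n}(1) \xrightarrow{} 0
\]
to $\P^r$ keeps it exact (it is a sequence of vector bundles), and on $\P^r$ the evaluation map $(a_0,\dots,a_n)\mapsto\sum_i a_i x_i$ involves only $x_0,\dots,x_r$. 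Consequently the last $n-r$ trivial summands map to zero and sit inside the kernel as a trivial subbundle $\bo_{\P^r}^{\oplus(n-r)}$, while the first $r+1$ summands restrict to the Euler sequence of $\P^r$ with kernel $\Omega_{\P^r}(1)$. These two subbundles of $\Omega_{\P^n}(1)|_{\P^r}$ meet trivially and their ranks sum to $r + (n-r) = n = \operatorname{rank}\Omega_{\P^n}(1)$, forcing
\[
	\Omega_{\P^n}(1)|_{\P^r} \iso \Omega_{\P^r}(1) \oplus \bo_{\P^r}^{\oplus(n-r)};
\]
restricting to $X$ gives the desired decomposition of $M_L$. With this in hand the rest is formal: I would expand $\wedge^{i+1}(\widetilde{M}_L \oplus \bo_X^{\oplus(n-r)}) \iso \bigoplus_{t} \wedge^{i+1-t}\widetilde{M}_L \otimes \wedge^{t}(\bo_X^{\oplus(n-r)})$, use $\wedge^{t}(\bo_X^{\oplus(n-r)}) \iso \bo_X^{\oplus\binom{n-r}{t}}$ (with terms vanishing for $t>n-r$ or $t>i+1$), twist by $\bo_X(k-i-1)$, and take $h^j$; additivity over finite direct sums yields exactly the stated sum.

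The one genuinely delicate step is the splitting itself, rather than merely an extension. A priori the conormal sequence $0 \to \bo_{\P^r}^{\oplus(n-r)} \to \Omega_{\P^n}(1)|_{\P^r} \to \Omega_{\P^r}(1) \to 0$ need not split, so I expect the \emph{main obstacle} to be the coordinate bookkeeping that realizes both $\Omega_{\P^r}(1)$ and the trivial piece as honest complementary subbundles of $\Omega_{\P^n}(1)|_{\P^r}$. This is where the linearity of the subspace and the spanning hypothesis on $X$ do the real work; once the splitting is secured, no further hard computation remains.
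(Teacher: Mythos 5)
Your proof is correct, and it arrives at exactly the same key decomposition as the paper, namely $M_L\vert_{\P^r} \iso \widetilde{M}_L \oplus \bo^{\oplus(n-r)}$ followed by the formal expansion of $\wedge^{i+1}$ of a direct sum with a trivial summand; the difference lies entirely in how the splitting is produced. The paper obtains it abstractly: it writes down the conormal--cotangent sequence $0 \to \bo_{\P^{n-1}}(-1) \to \Omega_{\P^n}\otimes\bo_{\P^{n-1}} \to \Omega_{\P^{n-1}} \to 0$, shows it splits because $\ext^1(\Omega_{\P^{n-1}},\bo_{\P^{n-1}}(-1))$ vanishes, and then asserts the analogous splitting for a general linear $\P^r \subset \P^n$. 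You instead exhibit the splitting explicitly inside the restricted Euler sequence by choosing coordinates with $\P^r = \{x_{r+1}=\cdots=x_n=0\}$, observing that the last $n-r$ trivial summands land in the kernel while the first $r+1$ reproduce the Euler sequence of $\P^r$, and checking fiberwise that these two subbundles are complementary. Your route avoids any cohomology or Ext computation, handles arbitrary codimension in a single step rather than by iteration, and makes the trivial summand visibly canonical (it is the space of linear forms vanishing on the span of $X$, which is where your spanning hypothesis enters); the paper's route is shorter on the page but leaves the general-$r$ case to a ``similarly.'' Both are complete and valid.
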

	
	\begin{proof}
		Consider a linear embedding $\P^{n-1} \subset \P^n$. We have the conormal-cotangent sequence
		\[
			0 \to \bo_{\P^{n-1}}(-1) \to \Omega_{\P^n} \otimes \bo_{\P^{n-1}} \to \Omega_{\P^{n-1}} \to 0.
		\]
		As
		\[
			\ext^1(\Omega_{\P^{n-1}}, \bo_{\P^{n-1}}(-1)) = h^{n-2}(\P^{n-1},\Omega_{\P^{n-1}}(-n-1)) = 0
		\]
		the sequence splits. Similarly, given a linear embedding $\P^r \subset \P^n$ and noting that $M_L = \Omega_{\P^n}(1)$ we have $M_L \otimes \bo_{\P^r} = \widetilde{M}_L \oplus T$ where $T$ is trivial of rank $n-r$. Thus
		\[
			\wedge^{i+1} M_L = \bigoplus_{t=0}^{n-r} \wedge^{i+1-t} \widetilde{M}_L \otimes \wedge^t T.
		\]
		As $T$ is trivial, the result follows.
	\end{proof}

	\begin{prop}\label{T:2.6}
		Let $X \subseteq \P^n$ and $Y = X \cup \l \subset \P^{n+1}$ be projective varieties such that $\l$ intersects $X$ transversally at a single point. If $i \ge 1$, then
		\[
			b_{i,i+2}(Y) = b_{i,i+2}(X) + b_{i-1,i+1}(X).
		\]
	\end{prop}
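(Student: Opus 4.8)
The plan is to read each graded Betti number appearing on both sides as the first cohomology of an exterior power of the appropriate $M_L$ bundle, and then to move between the two ambient spaces $\P^{n}$ and $\P^{n+1}$ using the splitting of Proposition~\ref{P:2.5} together with the Mayer--Vietoris comparison of Proposition~\ref{P:2.1}. Since $Y \subset \P^{n+1}$, Corollary~\ref{C:1.6}(2) gives $b_{i,i+2}(Y) = h^1(Y,\wedge^{i+1}M_L(1))$, where here $M_L = \Omega_{\P^{n+1}}(1)\otimes\bo_Y$. Writing $\widetilde{M}_L = \Omega_{\P^{n}}(1)\otimes\bo_X$ for the analogous bundle on $X\subset\P^{n}$, the same corollary gives $b_{i,i+2}(X) = h^1(X,\wedge^{i+1}\widetilde{M}_L(1))$ and $b_{i-1,i+1}(X) = h^1(X,\wedge^{i}\widetilde{M}_L(1))$. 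So the goal reduces to the cohomological identity
\[
	h^1(Y,\wedge^{i+1}M_L(1)) = h^1(X,\wedge^{i+1}\widetilde{M}_L(1)) + h^1(X,\wedge^{i}\widetilde{M}_L(1)).
\]

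First I would apply Proposition~\ref{P:2.1} in the ambient space $\P^{n+1}$, with the roles of the two varieties interchanged: take its ``$X$'' to be our union $Y = X\cup\l$ and its ``$Y$'' to be our $X$, with line $\l$. The hypotheses match exactly, since $\l\subset\P^{n+1}$ is a line meeting $X$ transversally at a single point. This produces $h^1(Y,\wedge^{i+1}M_L(1)) = h^1(X,\wedge^{i+1}M_L(1))$, where $M_L$ is still the bundle computed in $\P^{n+1}$ but now restricted to $X$. Next I would invoke Proposition~\ref{P:2.5} for the chain $X\subseteq\P^{n}\subset\P^{n+1}$, so that $n-r = (n+1)-n = 1$. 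With $k=i+2$ and $j=1$ the sum over $t$ collapses to the two terms $t=0$ and $t=1$, yielding
\[
	h^1(X,\wedge^{i+1}M_L(1)) = h^1(X,\wedge^{i+1}\widetilde{M}_L(1)) + h^1(X,\wedge^{i}\widetilde{M}_L(1)).
\]
Chaining these two equalities gives precisely the displayed identity, and translating back through Corollary~\ref{C:1.6}(2) produces the claimed formula $b_{i,i+2}(Y) = b_{i,i+2}(X) + b_{i-1,i+1}(X)$.

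The delicate point to manage is the bookkeeping between the two bundles $M_L$ and $\widetilde{M}_L$ and their two ambient spaces. The comparison of Proposition~\ref{P:2.1} must be carried out in $\P^{n+1}$, where the line $\l$ genuinely lives and is honestly transverse to $X$; it is Proposition~\ref{P:2.5} that accounts for the jump in embedding dimension by expressing $h^1$ of $\wedge^{i+1}M_L$ on $X$ in terms of the intrinsic $\widetilde{M}_L$ data, and the $t=1$ summand it contributes is exactly the $b_{i-1,i+1}(X)$ term. I would also confirm at the outset that Corollary~\ref{C:1.6}(2) is legitimately available for both $X$ and $Y$ --- that is, that each is ACM with $H^1(\bo(1))=0$, so that the passage between graded Betti numbers and $h^1$ holds --- which for the graph curves to which this proposition is applied is guaranteed by Assumption~\ref{A:1.1}. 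Finally, the hypothesis $i\ge 1$ is what makes $\wedge^{i}\widetilde{M}_L$ (and hence $b_{i-1,i+1}(X)$) meaningful as the second summand.
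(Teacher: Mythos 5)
Your proof is correct and follows essentially the same route as the paper's: apply Proposition~\ref{P:2.1} to replace $h^1(Y,\wedge^{i+1}M_L(1))$ by $h^1(X,\wedge^{i+1}M_L(1))$ in $\P^{n+1}$, then use Proposition~\ref{P:2.5} with $n-r=1$ to split this into the two terms identified via Corollary~\ref{C:1.6}(2). Your explicit bookkeeping between $M_L$ and $\widetilde{M}_L$ and the remark about when Corollary~\ref{C:1.6}(2) applies are welcome clarifications of details the paper leaves implicit.
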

	
	\begin{proof}
		By Proposition~\ref{P:2.1} and Proposition~\ref{P:2.5}, it follows that
		\begin{align*}
			h^1(Y, \wedge^{i+1}M_L(1)) 
				&= h^1(X,\wedge^{i+1}M_L(1))\\
				&= \sum_{t=0}^1 {1 \choose t} h^1(X,\wedge^{i+1-t}M_L(1))\\
				&= \sum_{t=0}^1 {1 \choose t} b_{i-t,k-t}(X)\\
				&= b_{i,i+2}(X) + b_{i-1,i+1}(X).\qedhere
		\end{align*}
	\end{proof}

	The above theorem tells us that the Betti table of any $d$-extension can be written solely in terms of the Betti table of the original graph. A particularly nice instance of Theorem~\ref{T:2.3} is when $G$ is an $n$-cycle:
	
	\begin{coro}\label{C:2.7}
		Let $G$ be any $d$-extension of an $n$-cycle. Then
		\[
			b_{n-1+j,n+j}(G) = {d \choose j}.
		\]
	\end{coro}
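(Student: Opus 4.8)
The plan is to induct on the number $d$ of lines adjoined, using Theorem~\ref{T:2.6} as the recursive engine and the cyclic-graph computation of Theorem~\ref{T:2.3} to supply the base case. I would realize the $d$-extension as a chain $G_0, G_1, \ldots, G_d = G$, where $G_0$ is the cycle and each $\ol{G_t}$ is obtained from $\ol{G_{t-1}}$ by attaching a line meeting it transversally at a single point, so that $\ol{G_t} \subset \P^{n+t}$. It is convenient to track only the quadratic strand, setting $q_i^{(t)} = b_{i,i+2}(\ol{G_t})$; Theorem~\ref{T:2.6} then says exactly that $q_i^{(t)} = q_i^{(t-1)} + q_{i-1}^{(t-1)}$ for all $i \ge 1$, which is Pascal's recurrence.

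First I would pin down the seed. By Theorem~\ref{T:2.3} the quadratic strand of $\ol{C}_{n+1} \subset \P^n$ has a single nonzero entry, namely $b_{n-1,n+1} = 1$, with every other $b_{i,i+2}$ equal to zero; since the curve is ACM of codimension $n-1$ its resolution terminates in homological degree $n-1$, so there is nothing beyond that column. In the above notation this is $q_{n-1}^{(0)} = 1$ and $q_i^{(0)} = 0$ otherwise: a single unit placed at homological position $n-1$.

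The inductive step is where the binomial coefficients appear. Assuming $q_{n-1+j}^{(t-1)} = {t-1 \choose j}$ for all $j$, I apply the recurrence at $i = n-1+j$; every cycle has $n \ge 2$, so $i \ge 1$ and Theorem~\ref{T:2.6} applies at each term. This gives
\[
q_{n-1+j}^{(t)} = q_{n-1+j}^{(t-1)} + q_{n-2+j}^{(t-1)} = {t-1 \choose j} + {t-1 \choose j-1} = {t \choose j},
\]
by Pascal's rule. Taking $t = d$ yields the quadratic-strand value $b_{n-1+j,\,n+1+j}(\ol{G}) = {d \choose j}$.

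I do not expect a genuine obstacle here: the whole content is the identification of the recursion in Theorem~\ref{T:2.6} with Pascal's triangle, seeded by the single nonzero quadratic entry of the cycle furnished by Theorem~\ref{T:2.3}. The only things requiring care are bookkeeping: confirming that $i = n-1+j \ge 1$ at every term so the recurrence is legitimately available, and keeping the homological and internal degrees shifted in lockstep—each extension raises both by one—so that the relevant entries remain on the quadratic strand $b_{i,i+2}$ throughout. Both are routine, so the binomial formula follows immediately from the standard closed form for Pascal's triangle generated by one nonzero initial entry.
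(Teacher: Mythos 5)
Your argument is correct and is essentially the paper's own: both feed the single nonzero cubic-strand entry of the cycle (Theorem~\ref{T:2.3}) into the recursion of Proposition~\ref{T:2.6}, you by an explicit induction via Pascal's rule, the paper by directly quoting the iterated closed form $b_{i,i+2}(\ol{G})=\sum_{t=0}^{d}\binom{d}{t}b_{i-t,i+2-t}(\ol{C})$. One small correction of terminology and indexing: the entries $b_{i,i+2}$ you track are what this paper calls the \emph{cubic} strand, and your conclusion $b_{n-1+j,\,n+1+j}(\ol{G})=\binom{d}{j}$ is the right one---the printed $b_{n-1+j,n+j}$ in the corollary is off by one in the second index, since that entry lies in the quadratic strand, which Proposition~\ref{T:2.6} does not control.
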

	
	\begin{proof}
		Let $G$ be an $d$-extension of an $n$-cycle. Then by Proposition~\ref{T:2.6}
		\[
			b_{n-1+j,n+j} = \sum_{t=0}^d {d \choose t} b_{n+j-1-t,n+j-t}.
		\]
		Since the cubic strand of an $n$-cycle is zero everywhere except for the $(n-1)$th column where it has a $1$, it follows that the above sum has a nonzero term only when $t = j$. Thus $b_{n-1+j,n+j}(G) = {d \choose j}$.
	\end{proof}
	
	\begin{coro}
	    Let $G$ be a graph satisfying Assumption~\ref{A:1.1} with corresponding curve $\ol{G} \subset \P^r$. If $H$ is a $d$-extension of $G$ then $\ol{H} \subset \P^{r+d}$ satisfies $N_{2,p}$ if and only if $\ol{G} \subset \P^r$ satisfies $N_{2,p}$.
	\end{coro}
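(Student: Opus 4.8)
The plan is to reduce to a single extension step and then chain the resulting equivalence. Since $H$ is a $d$-extension of $G$, there is a sequence $G=G_0,G_1,\ldots,G_d=H$ in which each $G_t$ arises from $G_{t-1}$ by attaching a degree-one vertex, i.e.\ geometrically by adjoining a line $\l$ meeting $\ol{G_{t-1}}$ transversally at one point. First I would check that every $G_t$ again satisfies Assumption~\ref{A:1.1}: attaching a pendant vertex preserves planarity, connectedness, and simplicity, strict subtrivalence is built into Definition~\ref{D:2.4}, and condition (5) survives because adjoining a degree-one vertex and its unique edge raises both the vertex count and the edge count of any connected subgraph containing it by one, leaving the genus $g_S=m_S-d_S+1$ unchanged while strictly increasing $d_S$. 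Granting this, Corollary~\ref{C:1.6} applies to each $\ol{G_t}$, so each is $3$-regular and $b_{i,i+2}(\ol{G_t}) = h^1(\ol{G_t}, \wedge^{i+1}M_L(1))$.

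Next I would use $3$-regularity to restate $N_{2,p}$. For each of these curves $b_{1,j}=0$ for $j\ge 4$ by Corollary~\ref{C:1.6}(1), so the requirement $b_{1,j}=0$ for $j>2$ collapses to $b_{1,3}=0$; thus $\ol{G_t}$ satisfies $N_{2,p}$ exactly when $b_{1,3}(\ol{G_t})=0$ and $b_{i,i+2}(\ol{G_t})=0$ for $1\le i\le p$. The whole statement therefore becomes an assertion about the vanishing of an initial segment of the cubic strand.

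The heart of the argument is a single extension step $X=\ol{G_{t-1}}$, $Y=X\cup\l=\ol{G_t}$. Proposition~\ref{T:2.6} gives, for $i\ge 1$, the relation $b_{i,i+2}(Y)=b_{i,i+2}(X)+b_{i-1,i+1}(X)$, while $b_{0,2}=0$ for both curves. Because all graded Betti numbers are non-negative, this identity shows $b_{i,i+2}(X)\le b_{i,i+2}(Y)$, so if $Y$ satisfies $N_{2,p}$ then every $b_{i,i+2}(X)$ in the relevant range vanishes and $b_{1,3}(X)=b_{1,3}(Y)=0$; hence $X$ satisfies $N_{2,p}$. Conversely, if $X$ satisfies $N_{2,p}$, then for $1\le i\le p$ both summands on the right vanish---the term $b_{i,i+2}(X)$ because $i\le p$, and $b_{i-1,i+1}(X)=b_{i-1,(i-1)+2}(X)$ either because $1\le i-1\le p$ or, when $i=1$, because $b_{0,2}(X)=0$---so $b_{i,i+2}(Y)=0$, and $b_{1,3}(Y)=b_{1,3}(X)=0$, giving $N_{2,p}$ for $Y$. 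Chaining this equivalence across $t=1,\ldots,d$ then yields $N_{2,p}(\ol{H})\iff N_{2,p}(\ol{G})$.

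I expect the only genuine obstacle to be the bookkeeping in verifying that the intermediate graphs $G_t$ inherit Assumption~\ref{A:1.1}, especially condition (5), since this is precisely what licenses the interpretation of $h^1(Y,\wedge^{i+1}M_L(1))$ as a Betti number used in Proposition~\ref{T:2.6}. Once that is in hand, the cohomological and numerical content is immediate from Proposition~\ref{T:2.6} together with the non-negativity of Betti numbers.
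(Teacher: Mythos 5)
Your argument is correct and takes essentially the same route as the paper: both rest on Proposition~\ref{T:2.6}, the paper invoking the iterated form $b_{i,i+2}(\ol{H})=\sum_{t=0}^{d}\binom{d}{t}b_{i-t,i+2-t}(\ol{G})$ in one stroke where you chain single extension steps. You are in fact somewhat more complete than the paper's proof, which only writes out the direction from $\ol{G}$ to $\ol{H}$; your appeal to non-negativity of Betti numbers (the $t=0$ term) is the right way to supply the converse, and your checks that the intermediate graphs inherit Assumption~\ref{A:1.1} and that $3$-regularity disposes of the $b_{1,j}$ condition are sound.
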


	\begin{proof}
	    It follows from Proposition~\ref{T:2.6} that
	    \[
	        b_{i,i+2}(\ol{H}) = \sum_{t=0}^d {d \choose t} b_{i-t,i+2-t}(\ol{G}).
	    \]
		Since $\ol{G}$ satisfies $N_{2,p}$, we know that for $1 \le i \le p$, $b_{i,i+2}(\ol{G}) = 0$, and thus $\ol{H}$ also satisfies $N_{2,p}$.
	\end{proof}

\subsection{The Quadratic Strand}

\begin{theorem}\label{T:2.9}
    If $\ol{G} \subset \P^n$ is a graph curve of genus $g$ associated to a graph $G$ that satisfies Assumption~\ref{A:1.1}, then for $i \ge 1$,
    \[
        b_{i,i+1}(\ol{G}) = n {n-1 \choose i} - g {n-1 \choose i-1} - {n \choose i+1} + b_{i-1,i+1}(\ol{G}).
    \]
\end{theorem}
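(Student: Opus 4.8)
The plan is to reduce the whole statement to a single cohomology computation via Corollary~\ref{C:1.6}(3), which already expresses $b_{i,i+1}(\ol{G})$ in terms of $h^1(\ol{G}, \wedge^{i+1}M_L)$, the genus $g$, and the lower-strand term $b_{i-1,i+1}(\ol{G})$. Comparing that identity with the target formula, it suffices to establish the clean intermediate identity
\[
	h^1(\ol{G}, \wedge^{i+1}M_L) = m\binom{n}{i+1} - d\binom{n-1}{i+1},
\]
where $d$ and $m$ are the numbers of vertices and edges of $G$. The remaining passage from this identity to the stated formula is then pure binomial bookkeeping, using $g = m-d+1$ and $n = d-g$.

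To compute $h^1(\ol{G}, \wedge^{i+1}M_L)$ I would use the normalization (Mayer--Vietoris) sequence for the nodal curve $\ol{G} = \bigcup_{j=1}^d \l_j$. Since $G$ is simple and strictly subtrivalent, each edge corresponds to a single node at which exactly two of the lines meet transversally, so writing $\nu$ for the normalization we have $0 \to \bo_{\ol{G}} \to \nu_*\bo \to \bigoplus_p \C_p \to 0$, the sum running over the $m$ nodes. Tensoring with the locally free sheaf $\wedge^{i+1}M_L$, of rank $\binom{n}{i+1}$, and taking cohomology yields
\begin{align*}
	0 &\to H^0(\ol{G}, \wedge^{i+1}M_L) \to \bigoplus_{j=1}^d H^0(\l_j, \wedge^{i+1}M_L) \to \bigoplus_p (\wedge^{i+1}M_L)_p\\
	  &\to H^1(\ol{G}, \wedge^{i+1}M_L) \to \bigoplus_{j=1}^d H^1(\l_j, \wedge^{i+1}M_L) \to 0.
\end{align*}
Proposition~\ref{P:1.7} kills the leftmost term, while the restriction $M_L|_{\l_j} = \bo(-1) \oplus T$ with $T$ trivial of rank $n-1$, recorded in the proof of Proposition~\ref{P:2.1}, gives $\wedge^{i+1}M_L|_{\l_j} = \bo^{\binom{n-1}{i+1}} \oplus \bo(-1)^{\binom{n-1}{i}}$; hence $h^0(\l_j, \wedge^{i+1}M_L) = \binom{n-1}{i+1}$ and $h^1(\l_j, \wedge^{i+1}M_L) = 0$, exactly as in the proof of Theorem~\ref{T:2.2}. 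The sequence thus collapses to a short exact sequence whose Euler characteristic delivers the intermediate identity.

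Finally I would substitute $d = n+g$ and $m = n+2g-1$ into the intermediate identity, plug into Corollary~\ref{C:1.6}(3), and simplify. The $g$-terms collapse via $2\binom{n}{i+1} - \binom{n-1}{i+1} - \binom{n+1}{i+1} = -\binom{n-1}{i-1}$ and the $g$-free terms via $(n-1)\binom{n}{i+1} - n\binom{n-1}{i+1} = n\binom{n-1}{i} - \binom{n}{i+1}$, both immediate from Pascal's rule, giving the claimed formula. The one genuinely substantive point, and the step I expect to need the most care, is justifying the normalization sequence uniformly for every admissible graph: one must know that each singularity of $\ol{G}$ is an ordinary node meeting exactly two lines, so that the cokernel sheaf is a sum of skyscrapers each of length $\operatorname{rk}\wedge^{i+1}M_L$, and that the restriction splitting of $M_L$ is the same on every line regardless of the degree of its vertex. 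Everything downstream is then a formal consequence of $H^0(\ol{G}, \wedge^{i+1}M_L) = 0$, which forces the connecting map to be injective and turns the long exact sequence into the simple dimension count above.
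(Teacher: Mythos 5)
Your argument is correct and follows the same route as the paper: reduce to Corollary~\ref{C:1.6}(3), compute $h^1(\ol{G},\wedge^{i+1}M_L)$ via an Euler characteristic on the normalization sequence for the $d$ lines and $m$ nodes (using Proposition~\ref{P:1.7} and the vanishing on each line), and substitute $d=n+g$, $m=n+2g-1$. Your write-up is if anything more explicit about the splitting of $M_L|_{\l_j}$ and the final binomial simplification, but the substance is identical.
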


\begin{proof}
    We have, from Corollary~\ref{C:1.6}, that
    \begin{equation}\label{E:2.1}
    		b_{i,i+1}(\ol{G}) = h^1(\ol{G}, \wedge^{i+1}M_L) - g{n+1 \choose i+1} + b_{i-1,i+1}(\ol{G}).
    \end{equation}
    Moreover, $h^0(\ol{G}, \wedge^{i+1}M_L) = 0$ by Proposition~\ref{P:1.7}. The Euler characteristic $\gx(\ol{G}, \wedge^{i+1}M_L)$ of $\ol{G}$ gives us    \begin{align*}
    		\gx(\ol{G},\wedge^{i+1}M_L)
			&= h^0(\ol{G},\wedge^{i+1}M_L) - h^1(\ol{G},\wedge^{i+1}M_L)\\
			&= - h^1(\ol{G},\wedge^{i+1}M_L).
    \end{align*}
	Let $\ol{G} = \bigcup_{i=1}^d \l_i$, where $\l_i$ are lines in $\P^n$ and the $p_i$ their points of intersections. As in the proof of Theorem~\ref{T:2.2} we have
	\begin{align*}
		0 &\to H^0(\ol{G}, \wedge^{i+1}M_L) \to \bigoplus_{i=1}^d H^0(\l_i,\wedge^{i+1}M_L|_{\l_i})\\
		  &\to H^0(\set{p_1, \ldots, p_m}, \wedge^{i+1}M_L) \to 0.
	\end{align*}
	Since $m = d + g - 1$ and $n = d - g$, the above imply that
	\begin{align*}
		\gx(\ol{G}, \wedge^{i+1}M_L)
			&= \bigoplus_{i=1}^d \gx(\l_i,\wedge^{i+1}M_L|_{\l_i}) - \gx(\set{p_1, \ldots, p_m}, \wedge^{i+1}M_L)\\
			&= d{n-1 \choose i+1} - m{n \choose i+1}\\
			&= (g+n){n-1 \choose i+1} - (2g+n-1){n \choose i+1}.
	\end{align*}
	Thus $h^1(\ol{G},\wedge^{i+1}M_L) = (2g+n-1){n \choose i+1} - (g+n){n-1 \choose i+1}$. Substitute this into~\eqref{E:2.1} to obtain the desired result.
\end{proof}

The followings is known for smooth curves \cite[8.6]{Eisenbud}:

\begin{coro}\label{C:2.10}
	If $\ol{G} \subset \P^n$ is a graph curve of genus $g$ associated to a planar graph $G$ satisfying Assumption~\ref{A:1.1}, then
	\[
		b_{n-1,n+1}(\ol{G}) = g.
	\]
\end{coro}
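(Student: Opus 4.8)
The plan is to use part (2) of Corollary~\ref{C:1.6} to convert the statement into a single cohomology computation and then to identify the relevant sheaf explicitly. Setting $i = n-1$ in Corollary~\ref{C:1.6}(2) gives
\[
    b_{n-1,n+1}(\ol{G}) = h^1(\ol{G}, \wedge^n M_L(1)),
\]
so everything reduces to understanding $\wedge^n M_L(1)$. The key observation is that $M_L$, being the kernel of the surjection $\gG(\ol{G},\bo(1)) \to \bo(1)$ with $\gG$ trivial of rank $n+1$, is a locally free sheaf of rank $n$ on $\ol{G}$, so $\wedge^n M_L$ is its determinant line bundle.

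Next I would compute this determinant directly from the defining short exact sequence. Taking top exterior powers in
\[
    0 \to M_L \to \bo_{\ol{G}}^{\,n+1} \to \bo_{\ol{G}}(1) \to 0
\]
gives $\det(\bo_{\ol{G}}^{\,n+1}) = \wedge^n M_L \otimes \bo_{\ol{G}}(1)$, hence $\wedge^n M_L = \bo_{\ol{G}}(-1)$. Twisting by $\bo_{\ol G}(1)$ yields $\wedge^n M_L(1) = \bo_{\ol{G}}$, so that
\[
    b_{n-1,n+1}(\ol{G}) = h^1(\ol{G}, \bo_{\ol{G}}).
\]
Because $G$ is connected, $\ol{G}$ is a connected reduced (nodal) curve, so $h^0(\ol{G},\bo_{\ol{G}}) = 1$, and therefore $h^1(\ol{G},\bo_{\ol{G}}) = 1 - \gx(\ol{G},\bo_{\ol{G}}) = p_a(\ol{G}) = g$, which is exactly the claim.

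The only steps requiring genuine care are the determinant identity and the identification $h^1(\bo_{\ol{G}}) = g$. For the former, I would emphasize that since $\ol{G}$ is nodal, the restriction $M_L$ of a locally free sheaf on $\P^n$ is itself locally free, so the multiplicativity $\det E = \det E' \otimes \det E''$ along the short exact sequence is legitimate; this is the main point to check, though I expect it to be routine rather than a serious obstacle. The latter is simply the definition of arithmetic genus for a connected curve, $g = p_a(\ol{G}) = h^1(\ol{G},\bo_{\ol{G}})$, used throughout the paper, so no further input is needed. This approach recovers for graph curves the statement cited from \cite[8.6]{Eisenbud} for smooth curves, without invoking the duality of the resolution.
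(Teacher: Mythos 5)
Your proof is correct, but it takes a genuinely different route from the paper's. The paper deduces the result from machinery it has already built: since $\ol{G}$ is ACM, the Auslander--Buchsbaum formula gives projective dimension $n-1$, hence $b_{n,n+1}(\ol{G})=0$, and substituting $i=n$ into the quadratic-strand formula of Theorem~\ref{T:2.9} (where all the binomial terms except $-g\binom{n-1}{n-1}$ vanish) forces $b_{n-1,n+1}(\ol{G})=g$. You instead work at the level of Corollary~\ref{C:1.6}(2) and identify the sheaf $\wedge^n M_L(1)$ outright: $M_L$ is the kernel of a surjection of locally free sheaves (equivalently, the restriction of $\Omega_{\P^n}(1)$, as the paper itself uses in Proposition~\ref{P:2.5}), hence locally free of rank $n$ on the nodal curve, and multiplicativity of determinants along the Euler sequence gives $\wedge^n M_L(1)\iso\bo_{\ol{G}}$, so $b_{n-1,n+1}=h^1(\ol{G},\bo_{\ol{G}})=p_a(\ol{G})=g$ by connectedness. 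The two points you flag as needing care are exactly the right ones and both go through. Your argument is more direct and explains \emph{why} the answer is the arithmetic genus, bypassing Theorem~\ref{T:2.9} and the Auslander--Buchsbaum step entirely (though note you still rely on the ACM hypothesis implicitly, since Proposition~\ref{P:1.5}, and hence Corollary~\ref{C:1.6}(2), is stated for ACM curves, and on linear normality to know $M_L$ has rank $n$); the paper's version is shorter given the formula it has already proved, but is purely numerical and obscures this geometric interpretation.
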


\begin{proof}
	By \cite{ACM}, $\ol{G}$ is ACM, so by the Auslander-Buchsbaum formula, the projective dimension of $\ol{G}$ is equal to $n-1$. Thus by Proposition~\ref{T:2.6}
	\[
		0 = b_{n,n+1}(\ol{G}) = n{n-1 \choose n} - g{n-1 \choose n-1} - {n \choose n+1} + b_{n-1,n+1}(\ol{G}).
	\]
	That is, $b_{n-1,n+1}(\ol{G}) = g$.
\end{proof}

\subsection{Genus Zero and One}

All graphs of genus zero are trees and all graphs of genus one are $d$-extensions of cyclic graphs. Thus with the results from the previous subsections, we now give formulas for the Betti tables of all curves of genus zero and genus one.

\begin{theorem}\label{P:2.12}
	If $\ol{G} \subset \P^n$ is a graph curve of genus zero, then for $i \ge 1$,
	\begin{align*}
		b_{i,i+1}(\ol{G}) &= n{n-1 \choose i} - {n \choose i+1},\\
		b_{i,i+2}(\ol{G}) &= 0.
	\end{align*}
\end{theorem}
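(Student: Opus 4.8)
The plan is to deduce both formulas from two earlier results: the cubic-strand recursion of Proposition~\ref{T:2.6} and the quadratic-strand identity of Theorem~\ref{T:2.9}. Since every genus-zero graph is a tree, I would first build $\ol{G}$ up from a path one line at a time. Concretely, choose a path $P_m \subseteq G$ and attach the remaining vertices as leaves, nearest to $P_m$ first; geometrically each such leaf corresponds to a line meeting the current curve transversally in a single point, exactly the situation of Proposition~\ref{T:2.6}, and the ambient space grows from $\P^{r}$ to $\P^{r+1}$ at each step. Each intermediate graph is a subtree of $G$, so it inherits planarity, simplicity and strict subtrivalence, the vertex degrees only increasing toward their final values $\le 3$.

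I would then show $b_{i,i+2}(\ol{G}) = 0$ for all $i \ge 1$ by induction along this construction. The base case is the path, for which Theorem~\ref{T:2.2} gives $b_{i,i+2}(\ol{P}_m) = 0$. For the inductive step, with $Y = X \cup \l$ as above, Proposition~\ref{T:2.6} gives $b_{i,i+2}(Y) = b_{i,i+2}(X) + b_{i-1,i+1}(X)$. The decisive observation is that both summands on the right are cubic-strand entries of $X$ --- each sits in the row $j-i=2$ --- so the recursion never leaves the cubic strand. Thus if the entire cubic strand of $X$ vanishes, so does that of $Y$, and the vanishing propagates to $\ol{G}$. When $i=1$ the term $b_{0,2}(X)$ vanishes trivially, since $b_{0,j}=0$ for $j>0$.

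Granting the vanishing of the cubic strand, the quadratic strand is immediate. In particular $b_{i-1,i+1}(\ol{G})=0$, as it is itself a cubic-strand entry. Since $\ol{G}$ satisfies Assumption~\ref{A:1.1} and has $g=0$, substituting $g=0$ and $b_{i-1,i+1}(\ol{G})=0$ into Theorem~\ref{T:2.9} yields
\[ b_{i,i+1}(\ol{G}) = n\binom{n-1}{i} - \binom{n}{i+1}, \]
which is the claimed formula.

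I expect the only real care to be organizational rather than mathematical. One must verify that the leaf-by-leaf construction legitimately presents $\ol{G}$ as an iterated transversal extension of $\ol{P}_m$, so that Proposition~\ref{T:2.6} applies verbatim at each stage; this is a routine check once the vertices are ordered by distance from the chosen path. The mathematical content then reduces to the single structural remark that the cubic-strand recursion closes within the cubic strand, which converts the genus-zero hypothesis into a one-line induction, after which the quadratic-strand formula is a direct substitution into Theorem~\ref{T:2.9}.
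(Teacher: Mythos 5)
Your proposal is correct and follows essentially the same route as the paper: the authors likewise realize a genus-zero graph as an iterated leaf-extension, use Proposition~\ref{T:2.6} to see that the cubic strand stays zero (as it does for the path, by Theorem~\ref{T:2.2}), and then feed $g=0$ and the vanishing cubic strand into Theorem~\ref{T:2.9}. Your write-up simply makes explicit the induction that the paper's proof leaves implicit.
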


\begin{proof}
	It follows from Theorems~\ref{T:2.2} and~\ref{T:2.9} that the quadratic strand of the Betti table for any line arrangement depends only on the degree and genus of the curve. Therefore all curves of genus zero and degree $d$  have the same quadratic strand as the curve represented by the path on $d$ vertices. Similarly, we can conclude that all $d$-extensions have the same cubic strand. The result then follows from Theorem~\ref{T:2.2}.
\end{proof}

\begin{theorem}\label{P:2.13}
	If $G$ is a $d$-extension of $C_{r+1}$ with corresponding curve $\ol{G} \subset \P^n$ of genus one, then for $i \ge 1$,
	\begin{align*}
		b_{i,i+1}(\ol{G}) &= n{n-1 \choose i} - {n-1 \choose i-1} - {n \choose i+1} + {d \choose i-r},\\
		b_{i,i+2}(\ol{G}) &= {d \choose i-r+1}.
	\end{align*}
\end{theorem}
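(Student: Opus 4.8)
The plan is to treat the two strands separately, handling the cubic strand first and then feeding it into the genus-one specialization of Theorem~\ref{T:2.9} to get the quadratic strand. First I would fix the numerology: since $\ol{G}$ arises from $C_{r+1}$ by adjoining $d$ degree-one vertices (each with a single edge), the graph $G$ has $r+1+d$ vertices and genus one, so $n = (r+1+d) - g = r+d$. Adjoining a degree-one vertex never creates a cycle, so the genus stays equal to one throughout the extension; in particular $G$ still satisfies Assumption~\ref{A:1.1}, and Theorem~\ref{T:2.9} applies with $g=1$.

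For the cubic strand the starting point is the base cycle. Specializing Theorem~\ref{T:2.3} to $C_{r+1}$ (replacing the ambient dimension there by $r$) shows that $b_{j,j+2}(\ol{C}_{r+1}) = 0$ for all $j$ except for $b_{r-1,r+1}(\ol{C}_{r+1}) = 1$. Iterating Proposition~\ref{T:2.6} through the $d$ steps of the extension yields
\[
    b_{i,i+2}(\ol{G}) = \sum_{t=0}^{d} {d \choose t}\, b_{i-t,\,i+2-t}(\ol{C}_{r+1}),
\]
and since the only nonzero summand occurs when $i-t = r-1$, i.e.\ $t = i-r+1$, I read off $b_{i,i+2}(\ol{G}) = {d \choose i-r+1}$, with the convention that the binomial vanishes outside $0 \le i-r+1 \le d$. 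This is the second asserted formula.

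For the quadratic strand I would invoke Theorem~\ref{T:2.9} with $g=1$, which gives
\[
    b_{i,i+1}(\ol{G}) = n{n-1 \choose i} - {n-1 \choose i-1} - {n \choose i+1} + b_{i-1,i+1}(\ol{G}).
\]
The last term is itself a cubic-strand entry, so I would substitute the formula just obtained with the index shifted: taking $j = i-1$ in $b_{j,j+2}(\ol{G}) = {d \choose j-r+1}$ gives $b_{i-1,i+1}(\ol{G}) = {d \choose i-r}$. Plugging this in produces exactly the claimed expression for $b_{i,i+1}(\ol{G})$.

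The argument is essentially mechanical once the earlier results are in hand; the only genuine obstacle is the index bookkeeping, in particular pinning down that the single nonzero cubic entry of $\ol{C}_{r+1}$ sits in column $r-1$, since an off-by-one there would propagate into both displayed formulas. As a consistency check I would set $d=0$: then $n=r$, the binomials ${0 \choose i-r+1}$ and ${0 \choose i-r}$ collapse to indicators supported at $i=r-1$ and $i=r$ respectively, and one recovers precisely the formulas of Theorem~\ref{T:2.3} for $\ol{C}_{r+1}$, including the vanishing $b_{r,r+1}=0$, which gives me confidence that the indexing is correct.
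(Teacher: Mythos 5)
Your proposal follows the paper's proof exactly: both iterate Proposition~\ref{T:2.6} to express $b_{i,i+2}(\ol{G})$ as $\sum_{t=0}^{d}\binom{d}{t}b_{i-t,i+2-t}(\ol{C}_{r+1})$, isolate the single nonzero cubic entry of the cycle at column $r-1$ via Theorem~\ref{T:2.3}, and then feed the resulting $b_{i-1,i+1}(\ol{G})=\binom{d}{i-r}$ into Theorem~\ref{T:2.9} with $g=1$. The extra bookkeeping and the $d=0$ consistency check are sound additions but do not change the argument.
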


\begin{proof}
	$G$ is a $d$-extension of $C_{r+1}$, so by Proposition~\ref{T:2.6} we have
	\[
		b_{i,i+2}(\ol{G}) = \sum_{t=0}^d {d \choose t} b_{i-t,i+2-t}(\ol{C}_{r+1}).
	\]
	Furthermore, by Theorem~\ref{T:2.3}, we know that $b_{i-t,i+2-t}(\ol{C}_{r+1})$ vanishes except when $i-t = r-1$. So the above simples to 
	\[
		b_{i,i+2}(\ol{G}) = {d \choose i-r+1}.
	\]
	Applying Theorem~\ref{T:2.9}, we obtain the desired result.
\end{proof}

\section{Curves of Higher Genus}

Theorem~\ref{T:2.9} enables us to compute the quadratic strand of any curve, regardless of its genus, in terms of its degree, genus, and the cubic strand. We apply this to find the graded Betti numbers for curves of higher genus. Although it is difficult to calculate the cubic strand in all cases, we are able to calculate the cubic strand for a class of higher genus graphs.

\begin{prop}\label{T:3.1}
    Let $\ol{G} = \ol{G}_1 \cup \ol{G}_2 \subset \P^n$ be a non-degenerate curve such that $\ol{G}_1 \cap \ol{G}_2 = \set{p}$ is a reduced point. If $\ol{G}_i$ spans $\P^{n_i}$, then
    \[
        b_{i,i+2}(\ol{G}) = \sum_{s=0}^{n-n_1} {n-n_1 \choose s} b_{i-s,i-s+2}(\ol{G}_1) + \sum_{t=0}^{n-n_2} {n-n_2 \choose t} b_{i-t,i-t+2}(\ol{G}).
    \]
\end{prop}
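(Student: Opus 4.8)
The plan is to recognize $b_{i,i+2}(\ol{G}) = h^1(\ol{G}, \wedge^{i+1}M_L(1))$ via Corollary~\ref{C:1.6}(2), split this cohomology along the node $p$, and then expand each resulting piece using Proposition~\ref{P:2.5}. Write $\mathcal{F} = \wedge^{i+1}M_L(1)$; since $M_L = \Omega_{\P^n}(1) \otimes \bo_{\ol{G}}$, this is the restriction to $\ol{G}$ of $\Omega_{\P^n}^{i+1}(i+2)$ and is locally free on $\ol{G}$. Because $\ol{G} = \ol{G}_1 \cup \ol{G}_2$ with $\ol{G}_1 \cap \ol{G}_2 = \set{p}$ a reduced point, there is a short exact sequence
\[
0 \to \mathcal{F} \to \mathcal{F}|_{\ol{G}_1} \oplus \mathcal{F}|_{\ol{G}_2} \to \mathcal{F}_p \to 0
\]
whose long exact cohomology sequence is of the same shape as the one appearing in Proposition~\ref{P:2.1}.

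The first step is to deduce the splitting $H^1(\ol{G}, \mathcal{F}) \iso H^1(\ol{G}_1, \mathcal{F}) \oplus H^1(\ol{G}_2, \mathcal{F})$. This follows once the evaluation map $H^0(\ol{G}_1, \mathcal{F}) \oplus H^0(\ol{G}_2, \mathcal{F}) \to \mathcal{F}_p$ is surjective, for which it suffices that $\mathcal{F}$ be globally generated at $p$. I would obtain this from the fact that $\Omega_{\P^n}^{i+1}(i+2)$ is globally generated on $\P^n$: wedging the Euler sequence $0 \to \Omega_{\P^n}(1) \to \bo^{n+1} \to \bo(1) \to 0$ exhibits $\Omega_{\P^n}^{i+1}(i+2)$ as a quotient of $\wedge^{i+2}\bo^{n+1}$, hence as a globally generated sheaf whose sections restrict to $\ol{G}_1$ and surject onto the fiber $\mathcal{F}_p$. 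This is the analogue, for a general component, of the explicit computation $M_L|_\l = \bo(-1) \oplus T$ used in Proposition~\ref{P:2.1}, and I expect it to be the only genuine obstacle in the argument.

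With the splitting established, the remainder is assembly. Applying Proposition~\ref{P:2.5} to $\ol{G}_j$, which spans $\P^{n_j} \subset \P^n$, with $k = i+2$ gives
\[
h^1(\ol{G}_j, \wedge^{i+1}M_L(1)) = \sum_{t=0}^{n-n_j} \binom{n-n_j}{t}\, h^1\!\left(\ol{G}_j, \wedge^{i+1-t}\widetilde{M}_L(1)\right),
\]
and Corollary~\ref{C:1.6}(2), read intrinsically for $\ol{G}_j \subset \P^{n_j}$, identifies $h^1(\ol{G}_j, \wedge^{i+1-t}\widetilde{M}_L(1)) = b_{i-t,i-t+2}(\ol{G}_j)$. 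Summing the contributions of $j=1$ and $j=2$ and feeding them through the splitting yields the claimed formula; in doing so one sees that the final summand should read $b_{i-t,i-t+2}(\ol{G}_2)$ rather than $b_{i-t,i-t+2}(\ol{G})$, so the displayed statement appears to contain a typographical error.

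Finally, I would record the standing hypotheses needed to invoke Corollary~\ref{C:1.6}: since that corollary is applied to $\ol{G}$ and to each $\ol{G}_j$ separately, each must be ACM with $H^1(\bo(1)) = 0$. In the graph-curve setting of Assumption~\ref{A:1.1} these hold automatically, but they are precisely what legitimizes the identification $b_{i,i+2} = h^1(\wedge^{i+1}M_L(1))$ used throughout, and so belong with the statement.
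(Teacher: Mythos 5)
Your proposal is correct and follows essentially the same route as the paper: the Mayer--Vietoris sequence at the node $p$, surjectivity of the evaluation map $f$ via global generation of $\wedge^{i+1}M_L(1)$ (the paper invokes the surjection $\wedge^{i+2}\gG \to \wedge^{i+1}M_L(1)$, which is exactly your wedge of the Euler sequence), and then Proposition~\ref{P:2.5} applied to each component in its span. You are also right that the final summand in the displayed formula is a typographical error for $b_{i-t,i-t+2}(\ol{G}_2)$.
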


\begin{remark}
    We note that Proposition~\ref{P:2.1} is a special case of the above theorem with $\ol{G}_2 = \l \subset \P^n$ being a line that intersects $\ol{G}_1$ transversally at a point.
\end{remark}

\begin{proof}[Proof of Proposition~\ref{T:3.1}]
    Consider the sequence
    \begin{align*}
        0 &\to H^0(\ol{G},\wedge^{i+1}M_L(1)) \to H^0(\ol{G}_1,\wedge^{i+1}M_L(1)) \oplus H^0(\ol{G}_2,\wedge^{i+1}M_L(1))\\
          &\xrightarrow{f} H^0(\set{p},\wedge^{i+1}M_L(1)) \to H^1(\ol{G},\wedge^{i+1}M_L(1))\\
          &\to H^1(\ol{G}_1,\wedge^{i+1}M_L(1)) \oplus H^1(\ol{G}_2,\wedge^{i+1}M_L(1)) \to 0.
    \end{align*}
	The surjection $\wedge^{j+1} \gG \to \wedge^j M_L(1)$ implies that $\wedge^j M_L(1)$ is globally generated on $\ol{G}_i$ for $j \ge 1$; this immediately implies $f$ is surjective and the result follows by Proposition~\ref{P:2.5}.
\end{proof}

\begin{coro}\label{C:3.3}
    With hypotheses and notation as in Proposition~\ref{T:3.1}, $\ol{G}_1$ and $\ol{G}_2$ satisfy $N_{2,p}$ if and only if $\ol{G}$ does also.
\end{coro}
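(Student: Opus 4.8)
The plan is to deduce the equivalence directly from the formula in Proposition~\ref{T:3.1}, after first reducing the condition $N_{2,p}$ to a statement about the cubic strand alone. For the curves at hand the defining condition $b_{1,j}=0$ for $j>2$ carries almost no extra information: since $\ol{G}$, $\ol{G}_1$, and $\ol{G}_2$ are $3$-regular by Corollary~\ref{C:1.6}, we already have $b_{1,j}=0$ for every $j\ge 4$, so the only surviving requirement is $b_{1,3}=b_{1,1+2}=0$, which is precisely the $i=1$ case of the cubic-strand condition. I would therefore begin by recording that, for these curves, $N_{2,p}$ is equivalent to the single family of vanishings $b_{i,i+2}=0$ for $1\le i\le p$. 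This turns the corollary into a purely numerical comparison of the cubic strands of the three curves via Proposition~\ref{T:3.1}.

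For the forward implication, assume $\ol{G}_1$ and $\ol{G}_2$ satisfy $N_{2,p}$ and fix $1\le i\le p$. In the two sums of Proposition~\ref{T:3.1} every Betti number that appears has homological index of the form $i-s$ or $i-t$ with $s,t\ge 0$, hence at most $i\le p$. When such an index lies in $\{1,\ldots,p\}$ the corresponding entry vanishes by hypothesis; when it equals $0$ the term vanishes because $b_{0,2}=0$; and when it is negative the term is trivially zero. Thus every summand vanishes and $b_{i,i+2}(\ol{G})=0$, so $\ol{G}$ satisfies $N_{2,p}$.

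The reverse implication is the place where I expect to need a genuine idea, namely nonnegativity. Assume $\ol{G}$ satisfies $N_{2,p}$, so $b_{i,i+2}(\ol{G})=0$ for $1\le i\le p$. The right-hand side of the formula in Proposition~\ref{T:3.1} is a sum of products of binomial coefficients and graded Betti numbers, each of which is a nonnegative integer, so a vanishing total forces every individual summand to vanish. In particular the $s=0$ term $b_{i,i+2}(\ol{G}_1)$ and the $t=0$ term $b_{i,i+2}(\ol{G}_2)$ are both zero for each $1\le i\le p$, whence $\ol{G}_1$ and $\ol{G}_2$ each satisfy $N_{2,p}$.

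The main (and essentially only) obstacle is the preliminary reduction rather than the two implications themselves: one must verify carefully that the $b_{1,j}$ clause in the definition of $N_{2,p}$ contributes nothing beyond $3$-regularity together with the $i=1$ cubic-strand vanishing, and that the boundary cases $b_{0,2}=0$ and the negative-index terms are correctly accounted for. Once this is in place, both directions are immediate consequences of Proposition~\ref{T:3.1} and the nonnegativity of graded Betti numbers.
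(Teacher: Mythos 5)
Your proof is correct and is essentially the argument the paper intends: the corollary is stated without proof precisely because it is meant to be read off from the formula of Proposition~\ref{T:3.1}, with the forward direction following term by term and the reverse direction from nonnegativity of the graded Betti numbers. Your preliminary reduction of $N_{2,p}$ to the cubic-strand vanishings via $3$-regularity (Corollary~\ref{C:1.6}) and the handling of the $b_{0,2}$ and negative-index terms are exactly the bookkeeping the paper leaves implicit.
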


We now show, via an example, how to compute the cubic strand of a higher genus curve by applying Proposition~\ref{T:3.1}.

\begin{example}\label{E:3.4}
    Let $G$ be a graph of arithmetic genus 3 shown below.
    
    \vspace{6pt}
    
    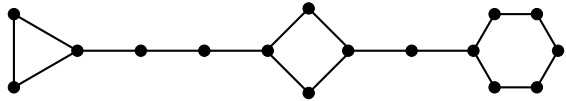
\begin{figure}[h]
    \begin{center}
    \begin{tikzpicture}[scale=.75]
		\node [draw, rotate=-90, thick, minimum size=1.125cm, regular polygon, regular polygon sides=3] at (0,0) {};
		\foreach \x in {-30,90,210} \draw [fill, rotate=-90] (\x:0.75cm) circle(.1);
		\draw[thick, rotate=-90] (0,.75cm)--(0,1.875cm)--(0,3cm)--(0,4.125cm);
		\foreach \x in {1.875,3,4.125} \draw[fill, rotate=-90] (0,\x cm) circle(.1);
		\node [draw, thick, minimum size=1.125cm, regular polygon, regular polygon sides=4,rotate=45] at (4.85cm,0) {};
		\draw[fill] (5.55cm,0) circle(.1);
		\draw[fill] (4.85cm,.75cm) circle(.1);
		\draw[fill] (4.85cm,-.75cm) circle(.1);
		\draw[thick] (5.55cm,0)--(6.675cm,0)--(7.8cm,0);
		\draw[fill] (6.675cm,0) circle(.1);
		\node [draw, thick, minimum size=1.125cm, regular polygon, regular polygon sides=6] at (8.52cm,0) {};
		\foreach \x in {0,60,...,300} \draw [fill,xshift=8.52cm] (\x:.75cm) circle(0.1);
	\end{tikzpicture}
	\caption{Graph of arithmetic genus 3.}
	\end{center}
	\end{figure}
	
	$\ol{G} \subset \P^{12}$ is a curve of degree 16 and genus 3. Applying Proposition~\ref{T:3.1} twice we see that the formula for the cubic strand is
	\begin{align*}
	    b_{i,i+2}(\ol{G})
			&= \sum_{r=0}^{11} {11 \choose r} b_{i-r,i-r+2}(\ol{C}_3) + \sum_{s=0}^{10} {10 \choose s} b_{i-s,i-s+2} (\ol{C}_4)\\
			&\phantom{\sum_{r=0}^{11}} + \sum_{t=0}^8 {8 \choose t} b_{i-t,i-t+2}(\ol{C}_6)\\
			&= {11 \choose i-1} + {10 \choose i-2} + {8 \choose i-4}.
	\end{align*}
	Now apply Theorem~\ref{T:2.9} to obtain the Betti table of $G$:
	
	\begin{center}
	\begin{table}[h]\scalebox{.85}
	{\begin{tabular}{c | c c c c c c c c c c c c c}
		- & 0 &  1 &   2 &    3 &    4 &    5 &   6  &    7 &    8 &    9 &  10 &  11 & 12 \\ \hline
	    T & 1 & 76 & 549 & 2024 & 4764 & 7764 & 9078 & 7707 & 4724 & 2040 & 589 & 102 &  8 \\
	    0 & 1 &  - &   - &    - &    - &    - &    - &    - &    - &    - &   - &   - &  - \\
	    1 & - & 75 & 537 & 1959 & 4553 & 7306 & 8378 & 6937 & 4114 & 1699 & 461 &  73 &  5 \\
	    2 & - &  1 &  12 &   65 &  211 &  458 &  700 &  770 &  610 &  341 & 128 &  29 &  3 \\
	\end{tabular} }
	\vspace{6pt}
	\caption{Betti table for Example \ref{E:3.4}.}
	\end{table}
	\end{center}
\end{example}

The above example shows that by applying Proposition~\ref{T:3.1} and~\ref{T:2.9}, we are able to fully describe the Betti table of any \emph{tree of cycles}.

\begin{definition}\label{D:3.5}
    A graph $G$ is a \emph{tree of cycles} if $G$ can be obtained from a tree by replacing a finite number of non-adjacent edges with cyclic graphs.
\end{definition}

We generalize Example~\ref{E:3.4} with the following proposition, obtained by direct computation.

\begin{theorem}\label{P:3.6}
    Suppose $G$ is a tree of cycles satisfying Assumption~\ref{A:1.1} that comprised of $k_j$ cycles of length $j$. Then
    \begin{align*}
        b_{i,i+1}(\ol{G}) &= n{n-1 \choose i} - \left( \sum_{j=3}^\oo k_j \right){n-1 \choose i-1} - {n \choose i+1}\\
                          &\phantom{n{n-1 \choose i}}+ \sum_{j=3}^\oo k_j {n-j+1 \choose i-j+1}\\
        b_{i,i+2}(\ol{G}) &= \sum_{j=3}^\oo k_j {n-j+1 \choose i-j+2}.
    \end{align*}
\end{theorem}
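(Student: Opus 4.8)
The plan is to isolate the cubic strand and deduce everything else from it. Granting the formula for $b_{i,i+2}(\ol{G})$, the quadratic strand is immediate from Theorem~\ref{T:2.9}: since $g = p_a(\ol{G}) = m-d+1$ counts the independent cycles of $G$, we have $g = \sum_{j=3}^\oo k_j$, and substituting
\[
    b_{i-1,i+1}(\ol{G}) = b_{i-1,(i-1)+2}(\ol{G}) = \sum_{j=3}^\oo k_j {n-j+1 \choose i-j+1}
\]
into the identity of Theorem~\ref{T:2.9} reproduces the first displayed formula verbatim. Thus the entire content of the theorem is the cubic-strand formula $b_{i,i+2}(\ol{G}) = \sum_{j=3}^\oo k_j {n-j+1 \choose i-j+2}$, which I would prove by induction on the number of cycles using Proposition~\ref{T:3.1}, following Example~\ref{E:3.4}.

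For the inductive step I would peel off one block at a time. Two lines of $\ol{G}$ meet exactly when the corresponding vertices are adjacent, so a partition $V(G) = V_1 \sqcup V_2$ crossed by a single edge $e$ yields $\ol{G} = \ol{G}_1 \cup \ol{G}_2$ with $\ol{G}_1 \cap \ol{G}_2 = \{p_e\}$ a single reduced point, which is precisely the hypothesis of Proposition~\ref{T:3.1}; such an $e$ is a bridge of $G$. Because $G$ is strictly subtrivalent (Assumption~\ref{A:1.1}(4)), no vertex lies on two cycles, so distinct cycles are linked only through paths of bridges. Hence, unless $G$ is a single cycle or a tree, it has a bridge, and cutting it writes $\ol{G}$ as a single-point union of two smaller trees of cycles. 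The base cases are a single cycle $C_\ell$, whose cubic strand is the one entry $b_{\ell-2,\ell}(\ol{C}_\ell) = 1$ given by Theorem~\ref{T:2.3}, and a tree, whose cubic strand vanishes by Theorem~\ref{P:2.12}; both match the claim, since $C_\ell$ spans $\P^{\ell-1}$ and ${n-j+1 \choose i-j+2}$ with $n = \ell-1$, $j = \ell$ equals $1$ exactly in homological degree $\ell - 2$.

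It then remains to check that the binomial liftings of Proposition~\ref{T:3.1} compose correctly, and here Vandermonde's identity does the work. If a cycle $C_j$ lies in a subcurve $\ol{G}_2$ spanning $\P^{n_2}$, then by induction it contributes ${n_2-j+1 \choose a-j+2}$ to $b_{a,a+2}(\ol{G}_2)$, and the lifting $\sum_t {n-n_2 \choose t} b_{i-t,i-t+2}(\ol{G}_2)$ of Proposition~\ref{T:3.1} turns this into
\[
    \sum_t {n-n_2 \choose t}{n_2-j+1 \choose i-t-j+2} = {n-j+1 \choose i-j+2},
\]
independent of the order in which blocks are removed; summing over all cycles gives the cubic-strand formula. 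I expect the main obstacle to be organizing this decomposition cleanly: one must verify that a bridge always exists once $G$ is neither a single cycle nor a tree --- which is exactly where subtrivalence, forbidding two cycles to share a vertex, is used --- and that the spans add ($n = n_1 + n_2$ for a single-point union of non-degenerate pieces), so that the Vandermonde collapse applies uniformly. The genus identity and the single-cycle base case are routine.
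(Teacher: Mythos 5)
Your argument is correct and is essentially the paper's own: the authors state that Theorem~\ref{P:3.6} is ``obtained by direct computation'' generalizing Example~\ref{E:3.4}, i.e.\ by repeatedly splitting the curve at bridge points via Proposition~\ref{T:3.1} and then feeding the cubic strand into Theorem~\ref{T:2.9}. Your write-up simply supplies the details the paper leaves implicit --- the induction on bridges, the additivity of spans, and the Vandermonde collapse of the iterated binomial liftings --- all of which check out.
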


Proposition~\ref{P:3.6} shows how the geometric properties of a graph can directly manifest themselves in the Betti table of the corresponding graph curves. We summarize them in the following two corollaries, the first is a specific case of a conjecture proposed by Burnham, Rosen, Sidman, and Vermeire in \cite{Burnham12}.

\begin{coro}\label{C:3.7}
    If $G$ is a tree of cycles with girth $\gr$ with corresponding graph curve $\ol{G} \subset \P^n$, then $b_{\gr-2,\gr}(\ol{G})$ is equal to the number of cycles of length $\gr$ in $G$.
\end{coro}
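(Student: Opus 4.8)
The plan is to read this off directly from the cubic-strand formula in Theorem~\ref{P:3.6}. Since $\gr = (\gr-2)+2$, the number $b_{\gr-2,\gr}(\ol{G})$ is the entry of the cubic strand in homological degree $\gr-2$, so I would apply the second displayed formula of Theorem~\ref{P:3.6} with $i = \gr-2$ (its hypotheses apply, as every graph in the paper satisfies Assumption~\ref{A:1.1}). Writing $k_j$ for the number of cycles of length $j$ in $G$, this gives
\[
    b_{\gr-2,\gr}(\ol{G}) = \sum_{j=3}^\oo k_j {n-j+1 \choose \gr-j}.
\]

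Next I would show that every summand except the one with $j = \gr$ vanishes, by comparing the index $j$ with the girth $\gr$. Since $\gr$ is by definition the length of a shortest cycle, $G$ contains no cycle of length less than $\gr$, so $k_j = 0$ whenever $j < \gr$ and those terms drop out. For $j > \gr$ the lower entry $\gr - j$ of the binomial coefficient is negative, whence ${n-j+1 \choose \gr-j} = 0$ under the usual convention. Finally, when $j = \gr$ the coefficient is ${n-\gr+1 \choose 0} = 1$, so the single surviving term contributes $k_\gr$.

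Combining these observations collapses the sum to $b_{\gr-2,\gr}(\ol{G}) = k_\gr$, which is exactly the number of cycles of length $\gr$ in $G$, as claimed. There is essentially no obstacle here: the entire content is the specialization of Theorem~\ref{P:3.6} together with the elementary observation that the girth forces $k_j = 0$ for $j < \gr$ and the standard vanishing of binomial coefficients with negative lower index for $j > \gr$. The only point requiring a moment's care is to confirm that the target entry really lies in the cubic strand (the row indexed by $2$ in the Betti table), which is immediate from $\gr = (\gr-2)+2$.
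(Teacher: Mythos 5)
Your proposal is correct and follows exactly the same route as the paper: specialize the cubic-strand formula of Theorem~\ref{P:3.6} at $i=\gr-2$, kill the terms with $j<\gr$ using the definition of girth and those with $j>\gr$ using the vanishing of binomial coefficients with negative lower index, leaving $k_\gr$. No differences worth noting.
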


\begin{proof}
    It follows from Proposition~\ref{P:3.6} that
    \[
        b_{\gr-2,\gr}(\ol{G}) = \sum_{j=3}^\oo k_j {n-j+1 \choose \gr-j}.
    \]
	Since $\gr$ is the girth of $G$, $k_j = 0$ for all $j < \gr$. Moreover, $\gr - j < 0$ when $j > \gr$, so the above simplifies to
	\[
	    b_{\gr-2,\gr}(\ol{G}) = \sum_{j=3}^\oo k_j {n-j+1 \choose \gr-j} = k_\gr {n-\gr+1 \choose \gr-\gr} = k_\gr.\qedhere
	\]
\end{proof}

\begin{coro}\label{C:3.8}
    If $G$ is a tree of cycles with corresponding graph curve $\ol{G} \subset \P^n$, then $b_{n-1,n}(\ol{G})$ is equal to the number of bridges in $G$.
\end{coro}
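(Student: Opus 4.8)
The plan is to obtain the result as a direct evaluation of the quadratic-strand formula in Theorem~\ref{P:3.6} at the very end of the resolution, combined with two elementary combinatorial identities for a tree of cycles. First I would substitute $i = n-1$ into the expression for $b_{i,i+1}(\ol{G})$ from Theorem~\ref{P:3.6}. The binomial coefficients collapse: ${n-1 \choose n-1} = 1$, ${n-1 \choose n-2} = n-1$, ${n \choose n} = 1$, and ${n-j+1 \choose n-j} = n-j+1$ (valid for every cycle length $j$ that occurs, since $\ol{C}_j$ spans $\P^{j-1} \subseteq \P^n$ forces $n-j+1 \ge 0$). Collecting terms I expect
\[
    b_{n-1,n}(\ol{G}) = n - 1 + 2\sum_{j=3}^\oo k_j - \sum_{j=3}^\oo j\,k_j.
\]

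Next I would translate this into graph-theoretic data. Writing $d$, $m$, $g$ for the number of vertices, edges, and the genus, recall $n = d-g$ and $m = d+g-1$. Since $G$ is a tree of cycles, its cyclomatic number equals the number of inserted cycles, so $g = \sum_{j} k_j$. Substituting then gives
\[
    n - 1 + 2g - \sum_{j} j\,k_j = (d-g) - 1 + 2g - \sum_{j} j\,k_j = d + g - 1 - \sum_{j} j\,k_j = m - \sum_{j} j\,k_j.
\]
Finally I would identify $m - \sum_j j\,k_j$ with the number of bridges. In a tree of cycles every edge either lies on a unique cycle or is a bridge, and because the cycles arise from pairwise non-adjacent edges (Definition~\ref{D:3.5}) they are vertex-disjoint, hence edge-disjoint; thus the cycles account for exactly $\sum_j j\,k_j$ edges and the remaining $m - \sum_j j\,k_j$ edges are precisely the bridges. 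Combining the three steps yields $b_{n-1,n}(\ol{G}) = m - \sum_j j\,k_j$, the number of bridges.

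The arithmetic here is entirely routine; the only points requiring care are the two structural claims about a tree of cycles, namely that $g = \sum_j k_j$ and that the non-cycle edges are exactly the bridges. Both rest on the edge-disjointness of the cycles guaranteed by the non-adjacency hypothesis, so I expect the main (and only minor) obstacle to be stating these combinatorial facts cleanly rather than any analytic difficulty. As a consistency check one can verify the formula against Example~\ref{E:3.4}, where $n=13$, $\sum_j k_j = 3$, and $\sum_j j\,k_j = 13$, giving $b_{12,13}(\ol{G}) = 13 - 1 + 6 - 13 = 5$, in agreement with both the tabulated entry and a direct bridge count.
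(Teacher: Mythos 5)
Your proposal is correct and follows essentially the same route as the paper's own proof: both substitute $i=n-1$ into Theorem~\ref{P:3.6}, simplify to $b_{n-1,n}(\ol{G}) = m - \sum_j jk_j$ using $g=\sum_j k_j$ together with the relations among $d$, $m$, and $n$, and then identify the non-cycle edges with the bridges. Your write-up merely makes the binomial collapse and the edge-disjointness of the cycles more explicit than the paper does.
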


\begin{proof}
    By Proposition~\ref{P:3.6}, we have
    \[
        b_{n-1,n}(\ol{G}) = 2 \left( \sum_{j=3}^\oo k_j \right) - \sum_{j=3}^\oo jk_j + n - 1.
    \]
	Since the genus of $G$ equals $\sum_{j=3}^\oo k_j$ and since $m = d+g-1$ and $n=2d-m-1$, the above simplifies to
	\[
	    b_{n-1,n}(\ol{G}) = m - \sum_{j=3}^\oo jk_j.
	\]
	This equals the number of edges of $G$ which are not part of cycles. In the case of a tree of cycles, this is exactly the number of bridges in $G$.
\end{proof}

\section{Conjectures and Future Work}

	\emph{Macaulay 2} has been an invaluable tool for our research. A large number of Betti tables were computed using it and these examples guided many of the results in this paper. It also motivated some conjectures that are mentioned in this section.
	
	In \cite{Burnham12} the authors conjectured that various structural properties of a graph could be used to calculate entries of the Betti table for the corresponding curve.
	
	\begin{conj}\label{Cj:4.1}
	    Let $G$ be a graph with $d$ vertices and girth $\gamma$. If $d = 2g + 1 + p$ and $\gamma-2 \le p$ then $b_{\gamma-2,\gamma}(\ol{G})$ is equal to the number of $\gamma$-cycles in $G$. 
	\end{conj}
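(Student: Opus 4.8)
The plan is to compute the single Betti number $b_{\gamma-2,\gamma}(\ol{G})$ cohomologically and to extract from it the combinatorics of the girth cycles. By Corollary~\ref{C:1.6}(2) we have $b_{\gamma-2,\gamma}(\ol{G}) = h^1(\ol{G},\wedge^{\gamma-1}M_L(1))$, so everything reduces to understanding this one cohomology group. Writing $\ol{G} = \bigcup_{v} \l_v$ as a union of lines indexed by the vertices of $G$, glued at the nodes indexed by the edges of $G$, I would use the normalization sequence
\[
    0 \to \wedge^{\gamma-1}M_L(1) \to \bigoplus_{v} \wedge^{\gamma-1}M_L(1)|_{\l_v} \to \bigoplus_{e} \left(\wedge^{\gamma-1}M_L(1)\right)_{p_e} \to 0,
\]
which is exact because $\wedge^{\gamma-1}M_L(1)$ is locally free on $\ol{G}$, being the restriction of a bundle on $\P^n$. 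Since $M_L|_{\l_v} \iso \bo(-1)\oplus T$ with $T$ trivial of rank $n-1$, as in Proposition~\ref{P:2.1}, each restriction splits as $\bo_{\l_v}(1)^{\binom{n-1}{\gamma-1}} \oplus \bo_{\l_v}^{\binom{n-1}{\gamma-2}}$, so that $H^1(\l_v,\wedge^{\gamma-1}M_L(1)) = 0$. The long exact sequence then identifies $h^1(\ol{G},\wedge^{\gamma-1}M_L(1))$ with the cokernel of the coboundary map
\[
    d\colon \bigoplus_{v} H^0(\l_v,\wedge^{\gamma-1}M_L(1)) \to \bigoplus_{e} \left(\wedge^{\gamma-1}M_L(1)\right)_{p_e}
\]
recording the discrepancies of the chosen sections at each node, so the entire problem becomes the computation of $\operatorname{coker}(d)$.

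The key structural input is that $\wedge^{\gamma-1}M_L(1)$ is globally generated on each line, being a quotient of $\wedge^{\gamma}\gG$ (compare the proof of Proposition~\ref{T:3.1}); hence every evaluation map $H^0(\l_v,\wedge^{\gamma-1}M_L(1)) \to (\wedge^{\gamma-1}M_L(1))_{p_e}$ is surjective. This lets me view $d$ as the coboundary of a constructible sheaf $\mathcal{F}$ on the graph $G$, with stalk $H^0(\l_v,\wedge^{\gamma-1}M_L(1))$ over a vertex $v$ and the node fiber over an edge $e$, so that $\operatorname{coker}(d) = H^1(G,\mathcal{F})$. I would compute this by a Mayer--Vietoris argument along $G$ that mirrors the proof of Corollary~\ref{C:3.7}: the bridges and tree-like portions contribute nothing to the cubic strand below the girth, reducing the computation to the cyclic parts. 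The girth hypothesis forces every cycle of $G$ to have length at least $\gamma$, and a dimension count using $\binom{n-1}{\gamma-2}$ and $\binom{n-1}{\gamma-1}$ should show that a cycle of length $j$ first contributes in cohomological degree $j-2$; only the $\gamma$-cycles reach degree $\gamma-2$, each contributing a one-dimensional class exactly as in Theorem~\ref{T:2.3}.

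The positivity hypothesis $\gamma-2 \le p$, that is $d \ge 2g+\gamma-1$, is what should make this count exact rather than a mere lower bound: it guarantees that $n = d-g$ is large enough for the binomial coefficients governing the sheaf $\mathcal{F}$ to stabilize, so that no spurious degree-$(\gamma-2)$ classes arise from the interaction of the cycles with the tail of the resolution (this is the analogue for line arrangements of the range in which Green's $N_{2,p}$ theorem operates). The hard part will be the sheaf $\mathcal{F}$ itself. Unlike the structure sheaf, the trivial summands $T$ of $M_L|_{\l_v}$ are identified at a shared node by a gluing map depending on the geometric position of the two lines, not merely on the combinatorics of $G$. For trees of cycles this is harmless because single-point intersections let Proposition~\ref{T:3.1} split the computation cycle by cycle; but for a general graph, where two $\gamma$-cycles may share an edge or a short path while still preserving the girth, one must show that these geometric gluings neither create extra degree-$(\gamma-2)$ cocycles nor impose relations that merge the contributions of distinct $\gamma$-cycles. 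Establishing that the classes of the individual $\gamma$-cycles remain linearly independent in $\operatorname{coker}(d)$ under arbitrary girth-preserving sharing is, I expect, the central obstacle, and the point at which the full strength of Assumption~\ref{A:1.1} must be invoked.
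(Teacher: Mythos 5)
This statement is a conjecture: the paper offers no proof of it in general, only verifications in special cases (genus zero and one via Theorems~\ref{P:2.12} and~\ref{P:2.13}, and trees of cycles via Corollary~\ref{C:3.7}). Your proposal does not close that gap either, and you say as much yourself. The reduction you set up is sound and matches the paper's machinery exactly: $b_{\gamma-2,\gamma}(\ol{G}) = h^1(\ol{G},\wedge^{\gamma-1}M_L(1))$ by Corollary~\ref{C:1.6}(2), the splitting $M_L|_{\l} = \bo(-1)\oplus T$ kills the $H^1$ on each line, and the Mayer--Vietoris sequence identifies the answer with the cokernel of the evaluation map at the nodes. But the step that would actually prove the conjecture --- that in $\operatorname{coker}(d)$ each $\gamma$-cycle contributes exactly one independent class and nothing else contributes in that degree --- is asserted, not argued. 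The phrase ``a dimension count \ldots should show'' is doing all the work, and the subsequent paragraph concedes that when two girth cycles share an edge or a path you do not know how to control the gluing of the trivial summands of $M_L$ at the shared nodes. That is precisely the obstruction the paper runs into: its only decomposition tool, Proposition~\ref{T:3.1}, requires the two pieces to meet in a single reduced point, which is why the result is established only for trees of cycles, and why for graphs like $C_4^k$ (cycles glued along edges) the paper can offer nothing beyond further conjectures.

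Two smaller points. First, your reading of the hypothesis $\gamma-2\le p$ (equivalently $d\ge 2g+\gamma-1$) as a stabilization condition on $n$ is a plausible heuristic but is not substantiated anywhere; you would need to exhibit a concrete mechanism by which violating it produces extra classes in degree $\gamma-2$, or show the count fails without it. Second, the claim that a cycle of length $j$ ``first contributes in cohomological degree $j-2$'' is extracted from Theorem~\ref{T:2.3}, which computes an isolated cycle spanning its own $\P^{j-1}$; transporting that to a cycle embedded in a larger arrangement requires exactly the independence statement you have deferred. So the proposal is a reasonable research plan, consistent in spirit with the paper, but it is not a proof, and the missing step is the same one that kept the statement a conjecture for the authors.
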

	
	All of our examples have thus far supported this conjecture. Moreover, Propositions~\ref{P:2.12} and~\ref{P:2.13} prove the above conjecture for graphs of genus zero and one respectively. (See also Proposition~\ref{C:3.7}.)
	
	We also noticed, in our database of Betti tables, similar examples of structural properties of the graph corresponding directly to entries of the Betti table. Corollary~\ref{C:2.10} is such an example; showing the last non-zero entry in the cubic strand is the genus of the graph.
	
	There also seems to be a similar pattern regarding the number of bridges in a graph. For example, consider the graphs below:
	
	\begin{center}
	\begin{table}[h]
	\begin{tabular}{p{1.5in} p{1.5in}}
	    \begin{tikzpicture}[scale=.6]
	          \draw[thick] (-1,0)--(0,-1)--(1,0)--(0,1)--cycle;
	          \draw[thick] (1,0)--(3,0);
	          \draw[fill] (-1,0) circle(.125);
			  \draw[fill] (1,0) circle(.125);
			  \draw[fill] (0,1) circle(.125);
			  \draw[fill] (0,-1) circle(.125);
			  \draw[fill] (2,0) circle(.125);
			  \draw[fill] (3,0) circle(.125);	        
	    \end{tikzpicture}
		&
		\begin{tikzpicture}[scale=2]
			  \node [draw, rotate=-90, thick, minimum size=1.5cm, regular polygon, regular polygon sides=5] at (0,0) {};
			  \foreach \x in {18,90,90+72,90+2*72,90+3*72} \draw [fill,rotate=-90] (\x:0.375cm) circle(0.0375);
			  \draw[thick] (0.375cm,0)--(.65cm,0);
			  \draw[fill] (.65cm,0) circle(.0375);		    
		\end{tikzpicture}
	\end{tabular}
	\vspace{6pt}
	%\caption{Graphs representing two curves in $\P^5$.}
	\end{table}
	\end{center}
	
	\vspace{6pt}

	Both of these graphs represent genus one curves in $\P^5$. However, the graph on the left has two bridges---edges which if removed result in a disconnected graph---while the graph on the right has only one bridge. Examining the Betti tables for the curves corresponding to these graphs indicates that some of the entries are the same while the others are different. In particular, $b_{4,5} = 2$ for the graph on the left and $b_{4,5} = 1$ for the one on the right.

	\begin{table}[h!]
	\begin{center}
	\begin{tabular}{ c | c  c  c cc} 
		- & 0 & 1 & 2 &3&4\\ \hline 
		T & 1 & 9  & 17 & 12&3 \\  
		0 & 1 & - & - & - &-\\  
		1 & - & 9 & 16 &10&2 \\ 
		2 & - & - & 1&2&1 \\  
	\end{tabular} 
	\quad \quad 
	\begin{tabular}{c|ccccc}
		-&0&1&2&3&4\\ \hline 
		T&1&9&16&10&2 \\ 
		0&1&-&-&-&-\\  
		1&-&9&16&9&1\\  
		2&-&-&-&1&1\\  
	\end{tabular}
	\end{center}
	
	\caption{Betti tables for curves represented by the graphs in Table 2.}
	\end{table}

	\begin{conj}\label{Cj:4.2}
		Let $\overline{G}\subset \P^{n}$ be a graph curve. Then $b_{n-1,n}(\overline{G})$ is equal to the number of bridges in $G$.
	\end{conj}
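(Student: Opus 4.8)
The plan is to reduce the statement to a clean gluing law along bridges together with a single vanishing statement for bridgeless graphs. First I would record the value of the last quadratic entry in terms of the cubic strand: evaluating Theorem~\ref{T:2.9} at $i=n-1$ and simplifying the binomials gives
\[
    b_{n-1,n}(\ol{G}) = (n-1)(1-g) + b_{n-2,n}(\ol{G}).
\]
Now if $e$ is a bridge of $G$, cutting it writes $\ol{G}=\ol{G}_1\cup\ol{G}_2$ with $\ol{G}_1\cap\ol{G}_2=\{p_e\}$ a single reduced point; here $\ol{G}_i$ spans $\P^{n_i}$ with $n=n_1+n_2$ and $g=g_1+g_2$, and each $G_i$, being a connected subgraph, again satisfies Assumption~\ref{A:1.1}. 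Feeding $i=n-2$ into Proposition~\ref{T:3.1} (so that $n-n_1=n_2$ and $n-n_2=n_1$), only the top two cubic entries of each factor survive the binomial sums, and Corollary~\ref{C:2.10} identifies the very top ones with the genera $g_i$; this yields
\[
    b_{n-2,n}(\ol{G}) = b_{n_1-2,n_1}(\ol{G}_1) + b_{n_2-2,n_2}(\ol{G}_2) + n_2 g_1 + n_1 g_2.
\]
Substituting the first display for $\ol{G}$, $\ol{G}_1$, and $\ol{G}_2$ into this identity and collecting terms, all dependence on the $n_i$ and $g_i$ cancels (the constant part is $1$ and the coefficients of $g_1,g_2$ both vanish), leaving the gluing law
\[
    b_{n-1,n}(\ol{G}) = b_{n_1-1,n_1}(\ol{G}_1) + b_{n_2-1,n_2}(\ol{G}_2) + 1.
\]

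Second, I would induct on the bridge decomposition. Cutting the bridges one at a time realizes $\ol{G}$ as an iterated single-point union of the curves of the $2$-edge-connected components (blocks) of $G$, and each cut contributes exactly one $+1$; since a connected graph with $B$ bridges admits exactly $B$ such cuts, additivity of the gluing law gives
\[
    b_{n-1,n}(\ol{G}) = \sum_{H} b_{n_H-1,n_H}(\ol{H}) + B,
\]
the sum running over the blocks $H$ and $B$ denoting the number of bridges. The conjecture is therefore equivalent to the \emph{Key Lemma}: every bridgeless block contributes nothing, i.e. if $H$ is a $2$-edge-connected graph satisfying Assumption~\ref{A:1.1} with $\ol{H}\subset\P^{n_H}$, then $b_{n_H-1,n_H}(\ol{H})=0$. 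This recovers Corollary~\ref{C:3.8} at once: in a tree of cycles every block is a single cycle $C_{r+1}$, and Theorem~\ref{T:2.3} gives $b_{n-1,n}(\ol{C}_{r+1})=0$.

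The Key Lemma is where I expect the real difficulty. The mechanism that drives the first step, point-gluing via Proposition~\ref{T:3.1}, offers no further reduction here: a block has no bridge to cut, and one checks that under strict subtrivalency a bridgeless graph has no cut vertex either (a cut vertex of a bridgeless graph has degree at least four), so the blocks are genuinely $2$-connected and cannot be split along a point. Already the subdivided theta graphs permitted by Assumption~\ref{A:1.1} lie outside the tree-of-cycles case and must be handled directly. My proposed route is duality. Since $\ol{H}$ is ACM of codimension $n_H-1$ and non-special, dualizing its minimal free resolution gives $b_{n_H-1,n_H}(\ol{H})=b_{0,1}(\omega_{\ol{H}})$, the number of degree-one minimal generators of the graded canonical module; by graded Nakayama this is the dimension of the cokernel of the multiplication map
\[
    \mu\colon H^0(\ol{H},\omega_{\ol{H}})\otimes H^0(\ol{H},\bo(1)) \longrightarrow H^0(\ol{H},\omega_{\ol{H}}(1)).
\]
As a check, the degree-zero analogue returns $b_{n_H-1,n_H+1}(\ol{H})=g$ of Corollary~\ref{C:2.10}. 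Thus the Key Lemma becomes the surjectivity of $\mu$ for bridgeless graph curves, and I would attack it through the combinatorial model of the dualizing sheaf of a nodal union of lines, under which $H^0(\ol{H},\omega_{\ol{H}})$ is identified with the cycle space of $H$. The aim is to show that for a $2$-connected $H$ this description makes $\mu$ surjective, with bridgelessness being exactly what rules out a degree-one generator supported on a would-be bridge. Carrying out this combinatorial surjectivity, while respecting the constraints of Assumption~\ref{A:1.1}, is the crux and the step I expect to be hardest.
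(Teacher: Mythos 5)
The statement you are proving is stated in the paper as a \emph{conjecture}: the authors explicitly say they are unable to prove it in general, verify it only for genus zero, cyclic graphs, and trees of cycles, and remark that ``by applying Proposition~\ref{T:3.1}, it is enough to verify this in the case where $G$ has no bridges.'' Your first half is a correct and worthwhile sharpening of exactly that remark. The computation $b_{n-1,n}(\ol{G})=(n-1)(1-g)+b_{n-2,n}(\ol{G})$ from Theorem~\ref{T:2.9} is right; splitting along a bridge does give $\ol{G}=\ol{G}_1\cup\ol{G}_2$ meeting at the single node $p_e$, with $n=n_1+n_2$, $g=g_1+g_2$, and each $G_i$ inheriting Assumption~\ref{A:1.1}; feeding $i=n-2$ into Proposition~\ref{T:3.1} and using projective dimension $n_i-1$ together with Corollary~\ref{C:2.10} does isolate the two surviving terms $b_{n_i-2,n_i}(\ol{G}_i)+n_{3-i}\,g_i$; and the cancellation yielding the gluing law $b_{n-1,n}(\ol{G})=b_{n_1-1,n_1}(\ol{G}_1)+b_{n_2-1,n_2}(\ol{G}_2)+1$ checks out (and agrees with the tables in Section~4, e.g.\ the two genus-one curves in $\P^5$). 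Your observation that subtrivalence forces the bridgeless blocks to be $2$-connected is also correct. This recovers Corollaries~\ref{C:2.7},~\ref{C:3.8} and the genus-zero case cleanly.

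The genuine gap is the Key Lemma: that $b_{n_H-1,n_H}(\ol{H})=0$ for every bridgeless $H$ satisfying Assumption~\ref{A:1.1}. Everything before it is bookkeeping that the paper already knows how to do; the bridgeless case is the entire open content of the conjecture, and your proposal does not close it. The duality reformulation --- $b_{n_H-1,n_H}(\ol{H})=b_{0,1}(\omega_{\ol{H}})=\dim\operatorname{coker}\bigl(H^0(\omega_{\ol{H}})\otimes H^0(\bo(1))\to H^0(\omega_{\ol{H}}(1))\bigr)$ --- is legitimate for an ACM non-special curve and is a sensible way to attack the problem, but the asserted combinatorial surjectivity of $\mu$ for $2$-connected subtrivalent graphs (e.g.\ already for subdivided theta graphs, which are not trees of cycles) is stated as an aim, not proved; no mechanism is given for producing sections of $\omega_{\ol{H}}(1)$ from products, and it is precisely here that a counterexample, if one exists, would live. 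As written, your argument establishes the implication ``bridgeless case $\Rightarrow$ conjecture'' but leaves the statement a conjecture, exactly where the paper leaves it.
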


	Although we are as of yet unable to prove this in general we are capable of showing this true for various classes of graphs.  Namely, by applying Proposition~\ref{P:2.13} we can easily see that if $\overline{G}\subset \P^{n}$ is a genus zero graph curve then $b_{n-1,n}(\overline{G})=n-1$, as expected since all edges in a genus zero curve are bridges.  Similarly, since cyclic graphs contain no bridges, Corollary~\ref{C:2.7} also shows this conjecture to be true for cyclic graphs. Futher, Corollary~\ref{C:3.8} shows that the conjecture holds for trees of cycles.  Note also that by applying Proposition~\ref{T:3.1}, it is enough to verify this in the case where $G$ has no bridges.

	Looking through our examples it appears that the Betti tables of various classes of graphs of higher genera follow some nice patterns.  For example, consider the following graphs formed by gluing 4-cycles together along one edge. Let $C_{n}^{k}$ denote the graph made by gluing $k$ copies of $C_{n}$ together along single edges as done below.
	
	\begin{figure}[h!]
	\begin{center} 
	\begin{tikzpicture}[scale=.7,colorstyle/.style={circle, draw=black!100,fill=black!100, thick, inner sep=0pt, minimum size=2 mm}]
		\node at (-9,1)[colorstyle]{};
		\node at (-7,1)[colorstyle]{};
		\node at (-5,1)[colorstyle]{};
		\node at (-3,1)[colorstyle]{};
		\node at (-9,-1)[colorstyle]{};
		\node at (-7,-1)[colorstyle]{};
		\node at (-5,-1)[colorstyle]{};
		\node at (-3,-1)[colorstyle]{};
		\draw[thick](-9,1)--(-7,1)--(-5,1)--(-3,1);
		\draw[thick](-9,-1)--(-7,-1)--(-5,-1)--(-3,-1);
		\draw[thick](-9,-1)--(-9,1);
		\draw[thick](-7,-1)--(-7,1);
		\draw[thick](-5,-1)--(-5,1);
		\draw[thick](-3,-1)--(-3,1);
	\end{tikzpicture}

	\begin{tikzpicture}[scale=.7,colorstyle/.style={circle, draw=black!100,fill=black!100, thick,
inner sep=0pt, minimum size=2 mm}]
		\node at (-1,1)[colorstyle]{};
		\node at (1,1)[colorstyle]{};
		\node at (3,1)[colorstyle]{};
		\node at (5,1)[colorstyle]{};
		\node at (-1,-1)[colorstyle]{};
		\node at (1,-1)[colorstyle]{};
		\node at (3,-1)[colorstyle]{};
		\node at (5,-1)[colorstyle]{};
		\node at (7,-1)[colorstyle]{};
		\node at (7,1)[colorstyle]{};

		\draw[thick](-1,1)--(1,1)--(3,1)--(5,1)--(7,1);
		\draw[thick](-1,-1)--(1,-1)--(3,-1)--(5,-1)--(7,-1);
		\draw[thick](-1,-1)--(-1,1);
		\draw[thick](1,-1)--(1,1);
		\draw[thick](3,-1)--(3,1);
		\draw[thick](5,-1)--(5,1);
		\draw[thick](7,-1)--(7,1);
	\end{tikzpicture}
	\end{center}
	\caption{Top: $C_{4}^{3}$ representing a genus three curve in $\P^{5}$.  Bottom: $C_{4}^{4}$ representing a genus four curve in $\P^{6}$.} 
	\end{figure}
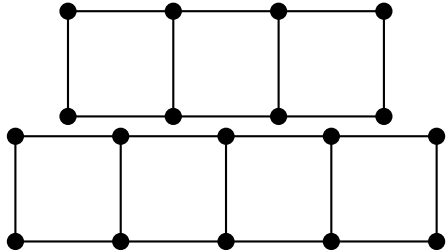

	\begin{table}[h!]
	\begin{tabular}{ c | c c c c c} 
		- & 0 & 1 &  2 &  3 & 4\\ \hline 
		T & 1 & 7 & 14 & 11 & 3\\  
		0 & 1 & - &  - &  - & -\\  
		1 & - & 7 &  8 &  3 & -\\  
		2 & - & - &  6 &  8 & 3\\  
	\end{tabular}
	\quad \quad 
	\begin{tabular}{ c | c c c c c c}
	- & 0 & 1 & 2 &3&4&5\\ \hline 
	T & 1 & 11 & 30 & 35 &19&4 \\ 
	0 & 1 & - & - & - &-&-\\ 
	1 & - & 11 & 20&15&4&- \\  
	2&- & - & 10 &20&15&4  \\   
	\end{tabular}
	\caption{Betti tables for $C_{4}^{3}$ and $C_{4}^{4}$ respectively.}
	\end{table}

	Looking at the Betti tables for these graphs one is struck by the apparent symmetry between the quadratic and cubic strands,  Upon further inspection of these and the Betti tables for other $C_{4}^{k}$ we have noticed quite a few other patterns, which characterize a good portion of these Betti tables.
	\begin{conj}
		Let $G$ be a graph comprised of $k$ $C_{4}$'s glued together along one edge. Then:
		\begin{align*}
		      b_{2,4}(\overline{G}) &= b_{1,2}(\overline{G})-1,\\
		    b_{i,i+1}(\overline{G}) &= b_{i+2,i+4}(\overline{G}), \quad i\geq 2,\\
		    b_{k,k+2}(\overline{G}) &=\left(b_{k+1,k+3}(\overline{G})\right)^{2}-1.
		\end{align*}
	\end{conj}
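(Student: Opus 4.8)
The plan is to reduce the whole conjecture to a single closed formula for the cubic strand and then to prove that formula by an induction that glues on one square at a time. Write $G = C_4^k$; counting vertices and edges gives $d = 2k+2$ and $m = 3k+1$, hence $g = m-d+1 = k$ and $\overline{G}\subset\P^n$ with $n = d-g = k+2$. By Theorem~\ref{T:2.9},
\[
    b_{i,i+1}(\overline{G}) = B(i) + b_{i-1,i+1}(\overline{G}), \qquad B(i) := n\binom{n-1}{i} - g\binom{n-1}{i-1} - \binom{n}{i+1},
\]
so every entry of the table is determined by the cubic strand $b_{i,i+2}(\overline{G})$ together with the universal (graph-independent) term $B(i)$. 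I would first establish
\[
    b_{i,i+2}(\overline{C_4^k}) = (i-1)\binom{k+1}{i},
\]
from which all three identities fall out. In particular the extremal value $b_{k+1,k+3} = k\binom{k+1}{k+1} = k$ matches Corollary~\ref{C:2.10} ($b_{n-1,n+1}=g$), while $b_{k,k+2} = (k-1)\binom{k+1}{k} = (k-1)(k+1) = k^2-1$, so the \emph{squaring} in the last identity is nothing but the factorization $k^2-1 = (k-1)(k+1)$.

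Granting this formula, the identities become binomial bookkeeping. Since $b_{0,2}=0$, Theorem~\ref{T:2.9} gives $b_{1,2} = B(1) = \binom{n}{2}-g$, so the first identity is equivalent to $b_{2,4} = \binom{k+2}{2}-k-1 = \binom{k+1}{2}$, the $i=2$ case of the formula; the third identity is the pair $b_{k+1,k+3}=k$, $b_{k,k+2}=k^2-1$ checked above. After substituting Theorem~\ref{T:2.9}, the middle identity reads $b_{i+1,i+3}-b_{i-1,i+1} = B(i)$ for $i\ge 2$, which with the formula becomes the elementary identity $i\binom{k+1}{i+1} - (i-2)\binom{k+1}{i-1} = B(i)$. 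I note that the tables show $b_{i,i+1}=b_{i+2,i+4}$ already fails for $C_4^4$, whereas $b_{i,i+1}=b_{i+1,i+3}$ holds; I would prove the middle identity in this corrected (shift-by-one) form.

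The substance is therefore the cubic-strand formula, which I would prove by induction on $k$ via the decomposition $\overline{C_4^{k+1}} = \overline{C_4^k}\cup_\l \overline{C_4}$ along the terminal rung $\l\cong\P^1$. Tensoring the Mayer--Vietoris sequence $0\to\bo_{\overline{C_4^{k+1}}}\to\bo_{\overline{C_4^k}}\oplus\bo_{\overline{C_4}}\to\bo_\l\to 0$ with $\wedge^{i+1}M_L(1)$ and using $H^1(\l,\wedge^{i+1}M_L(1)|_\l)=0$ (the splitting $M_L|_\l = \bo(-1)\oplus T$ of Proposition~\ref{P:2.1}) yields
\[
    b_{i,i+2}(\overline{C_4^{k+1}}) = h^1(\overline{C_4^k},\wedge^{i+1}M_L(1)) + h^1(\overline{C_4},\wedge^{i+1}M_L(1)) + \big( h^0(\l,\wedge^{i+1}M_L(1)|_\l) - \operatorname{rank}\phi \big),
\]
where $\phi\colon H^0(\overline{C_4^k},\wedge^{i+1}M_L(1))\oplus H^0(\overline{C_4},\wedge^{i+1}M_L(1))\to H^0(\l,\wedge^{i+1}M_L(1)|_\l)$ is the difference-of-restrictions map. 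Proposition~\ref{P:2.5}, applied with $n-n_1=1$ and $n-n_2=k$, reduces the first two terms to the cubic strands of $\overline{C_4^k}$ (inductive hypothesis) and of $\overline{C_4}$ (only $b_{2,4}=1$, by Theorem~\ref{T:2.3}), giving $b_{i,i+2}(\overline{C_4^k}) + b_{i-1,i+1}(\overline{C_4^k}) + \binom{k}{i-2}$. Setting $\delta(i) := h^0(\l,\wedge^{i+1}M_L(1)|_\l) - \operatorname{rank}\phi$, the target formula is equivalent to the recursion $b_{i,i+2}(\overline{C_4^{k+1}}) = b_{i,i+2}(\overline{C_4^k}) + b_{i-1,i+1}(\overline{C_4^k}) + \binom{k+1}{i-1}$, so by Pascal it remains to show $\delta(i) = \binom{k}{i-1}$ for $i\ge 2$.

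The hard part is exactly the evaluation of $\delta(i)$, i.e.\ the failure of $\phi$ to be surjective, and this is where gluing along a \emph{line} departs from Proposition~\ref{T:3.1}: there the pieces met in a single point and global generation forced surjectivity, whereas here the linear spans of $\overline{C_4^k}$ and $\overline{C_4}$ meet in a plane containing $\l$ (one checks $n = n_1+n_2-2$, not $n_1+n_2-1$), so $\phi$ genuinely has a cokernel. Using $\wedge^{i+1}M_L(1)|_\l = \wedge^{i+1}T(1)\oplus\wedge^i T$ with $T$ trivial of rank $n-1$, I would compute $h^0(\l,\wedge^{i+1}M_L(1)|_\l) = 2\binom{n-1}{i+1}+\binom{n-1}{i}$ and then identify $\operatorname{coker}\phi$ by tracking which sections on $\l$ lift to each side; the expected answer $\dim\operatorname{coker}\phi = \binom{k}{i-1} = \binom{n-3}{i-1}$ signals that the obstruction is controlled by the $n-3$ directions transverse to the common plane, which I would make precise by analysing the images of the two restriction maps against the splitting of $M_L|_\l$. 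Once $\delta(i)=\binom{k}{i-1}$ is in hand, the recursion together with the base case $b_{i,i+2}(\overline{C_4^1}) = 1$ for $i=2$ and $0$ otherwise (Theorem~\ref{T:2.3}) gives the formula, and the second paragraph finishes the three identities. A route worth attempting that would sidestep the section count altogether is to recognise $\overline{C_4^k}$ as a ladder-determinantal curve and read its Betti numbers from the known (Eagon--Northcott type) resolution; identifying that structure would then be the work.
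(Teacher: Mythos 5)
This statement is a conjecture; the paper offers no proof of it, so there is nothing to compare your argument against and I can only assess it on its own terms. The parts of your proposal that can be checked are correct and genuinely useful: the numerology $d=2k+2$, $g=k$, $n=k+2$ is right; the closed formula $b_{i,i+2}(\overline{C_4^k})=(i-1)\binom{k+1}{i}$ reproduces both Betti tables printed in the paper; the three identities do reduce to it via Theorem~\ref{T:2.9} plus an elementary binomial identity (which I verified); and you are right that the middle identity as literally stated fails on the paper's own table for $C_4^3$ (where $b_{2,3}=8\neq 3=b_{4,6}$) and should be the shift-by-one symmetry $b_{i,i+1}=b_{i+1,i+3}$. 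That correction alone is worth recording.

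The gap is in the only step carrying real content: the evaluation $\delta(i)=\dim\operatorname{coker}\phi=\binom{k}{i-1}$ is asserted, not proved, and the geometric setup in which you propose to prove it is not quite right. In the graph-to-curve dictionary a shared edge is a \emph{point} and a shared vertex is a \emph{line}, so writing $\overline{C_4^{k+1}}=\overline{C_4^k}\cup\overline{C_4}$ with the last square containing the terminal rung means the two pieces share the two lines corresponding to the rung's endpoints; their intersection is a pair of lines meeting at a point (these do span your plane), not a single $\ell\cong\P^1$, so the count $h^0(\ell,\wedge^{i+1}M_L(1)|_\ell)=2\binom{n-1}{i+1}+\binom{n-1}{i}$ is the $h^0$ of the wrong scheme. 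The alternative decomposition, attaching only the two new lines, meets $\overline{C_4^k}$ in two points, and there the danger runs the other way: if $H^0(\overline{C_4^k},\wedge^{i+1}M_L(1))$ already surjects onto the two fibres then $\delta=0$ and the recursion dies, so one must prove non-surjectivity and compute the cokernel exactly. Either way, this cokernel computation --- precisely the point where Propositions~\ref{P:2.1} and~\ref{T:3.1} stop applying because the pieces no longer meet in a single point --- is the entire substance of the conjecture, and it is left as ``I would identify the cokernel by tracking which sections lift.'' The target recursion $b_{i,i+2}(\overline{C_4^{k+1}})=b_{i,i+2}(\overline{C_4^k})+b_{i-1,i+1}(\overline{C_4^k})+\binom{k+1}{i-1}$ is consistent with the data and is a sensible reduction, and the Eagon--Northcott/ladder-determinantal suggestion may well be the cleaner route, but as written this is a reduction of the conjecture to another unproved statement, not a proof.
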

	
\bibliographystyle{plain}
\bibliography{ref}
\end{document}